\newtheorem{theorem}{Theorem}[section]
\newtheorem{Lemma}[theorem]{Lemma}
\newtheorem{corollary}[theorem]{Corollary}
\newtheorem{Prop}[theorem]{Proposition}
\newtheorem{Thm}[theorem]{Theorem}
\theoremstyle{definition}
\newtheorem{remark}[theorem]{Remark}
\newtheorem{example}[theorem]{Example}
\numberwithin{equation}{section}
\newcommand{\id}{\operatorname{id}}
\newcommand{\Ker}{\operatorname{Ker}}
\newcommand{\Max}{\operatorname{Max}}
\newcommand{\Spec}{\operatorname{Spec}}
\newcommand{\Reg}{\operatorname{Reg}}
\newcommand{\Jac}{\operatorname{Jac}}
\newcommand{\cmat}{\left(\begin{array}}
\newcommand{\fmat}{\end{array}\right)}
\newcommand{\gm}{\mathfrak{m}}
\newcommand{\gp}{\mathfrak{p}}
\newcommand{\ga}{\mathfrak{a}}
\newcommand{\gb}{\mathfrak{b}}
\newcommand{\gc}{\mathfrak{c}}
\newcommand{\gi}{\mathfrak{i}}
\newcommand{\gj}{\mathfrak{j}}
\newcommand{\gh}{\mathfrak{h}}
\newcommand{\f}{\mathfrak}
\newcommand{\RR}{A\bowtie^{f,g}(\gb,\gc)}
\begin{document}

 \title[]{On bi-amalgamated constructions}
  

 \author{Federico Campanini}
\address{Dipartimento di Matematica ``Tullio Levi Civita", Universit\`a di Padova, 35121 Padova, Italy}
\email{federico.campanini@math.unipd.it, carmelo@math.unipd.it}
 
\author{Carmelo Antonio Finocchiaro}



\begin{abstract} 
	Let $f:A\longrightarrow B, g:A\longrightarrow C$ be ring homomorphisms and let $\f b$ (resp., $\f c$) be an ideal of $B$ (resp., $C$) satisfying $f^{-1}(\f b)=g^{-1}(\f c)$. Recently Kabbaj, Louartiti and Tamekkante defined and studied the following subring 
	$$
	\RR :=\{(f(a)+b, g(a)+c)\mid a\in A, b\in\f b, c\in \f c \}
	$$
	of $B\times C$, called the bi-amalgamation of $A$ with $(B,C)$ along $(\f b, \f c)$, with respect to $(f,g)$. This ring construction is a natural generalization of the amalgamated algebras, introduced and studied by D'Anna, Finocchiaro and Fontana. 
	
	The aim of this paper is to continue the investigation started by Kabbaj, Louartiti and Tamekkante, by providing a deeper insigt on the ideal-theoretic structure of bi-amalgamations. 
\end{abstract}
\keywords{Pr\"ufer ring, Gauss ring, fiber product, amalgamated algebras}

\subjclass[2010]{13A15, 13F05}
\maketitle

\section{Introduction}
Let $A$ be a ring and let $\f a$ be an ideal of $A$. In \cite{DF} the authors introduced the following subring 
$$
A\bowtie \f a:=\{(a,a+x)\mid a\in A,x\in \f a \}
$$
of the direct product $A\times A$, and named it \emph{the amalgamated duplication of $A$ along $\f a$}. This ring construction arises as a sort of variant of the Nagata idealization $A(+)X$  of an $A$-module $X$: it is defined by endowing the product $A\times X$ with the ring structure where addition is induced componentwise from additions of $A$ and $X$, and multiplication is defined  by setting
$$
(a,x)(b,y):=(ab, ay+bx), \quad \mbox{for all}\quad (a,x),(b,y)\in A(+)X;
$$
in such a way the ring $A$ is identified as a subring of $A(+)X$ and the $A$-module $X$ is isomorphic to the ideal $0\times X$ of $A(+)X$. It can be proved that if $\f a$ is a nilpotent ideal of index 2, then the duplication $A\bowtie \f a$ is canonically isomorphic to the idealization $A(+)\f a$, but in general this is not the case, since if $A$ is a reduced ring, then $A\bowtie \f a$ is a reduced ring, while $(0)\times X$ is a nilpotent ideal of index 2 of $A(+)X$, whenever $X\neq 0$. Among motivations for studying duplications of rings along ideals, it is worth noting that in \cite{D} the author showed that the amalgamated duplication of an algebroid curve along a regular multiplicative ideal always yields  an algebroid curve. 

As it is immediately seen, whenever $\f a$ is a nonzero ideal of $A$ the ring $A\bowtie \f a$ is not an integral domain. A new ring construction, which is more general than the amalgamated duplication and can be an integral domain, was introduced in \cite{DFF} as follows: starting from a ring homomorphism $f:A\longrightarrow B$ and from an ideal $\f b$ of $B$, the author defined \emph{the amalgamation of $A$ and $B$ along $\f b$, with respect to $f$} to be the following subring
$$
A\bowtie^f\f b:=\{(a,f(a)+b)\mid a\in A, b\in\f b \}
$$
of the direct product $A\times B$. Obviously, if $f={\rm Id}_A:A\longrightarrow A$ and $\f a$ is an ideal of $A$, then $A\bowtie^f\f a$ is the classical amalgamated duplication. This new ring construction, whose genesis is in some sense related to the procedure due to Dorroh of endowing a ring (without unity) with a unity (see, for instance, \cite[Remark 2.2]{DF}), is a powerful tool for unifying several classical classes of rings, as $D+\f m$ domains, rings of the type $A+XB[X]$, $A+XB[\![X]\!]$, CPI extensions. The fact that rings of the form $A\bowtie^f\f b$ arise with a peculiar fiber product structure allowed to provide a complete description of the order theoretic and  topological properties of its prime spectrum and furthermore to find bounds for its Krull dimension \cite{DFF1}. In \cite{FCA1} and, more recently, in \cite{ASS} the authors investigated the transfer of Pr\"ufer, Gaussian and arithmetical conditions to amalgamated constructions. 

A new class of rings, which covers that of amalgamated algebras as a particular case, was recently introduced in \cite{KLT}: given ring homomorphisms $f:A\longrightarrow B, g:A\longrightarrow C$ and given ideals $\f b$ of $B$ and $\f c$ of $C$ such that $f^{-1}(\f b)=g^{-1}(\f c)$, the subring 
$$
\RR:=\{(f(a)+b,g(a)+c)\mid a\in A, b\in\f b, c\in\f c \}
$$
of $B\times C$ is called \textit{the bi-amalgamation of $A$ with $(B,C)$ along $(\f b,\f c)$ with respect to $(f,g)$}. In \cite{KLT} the authors investigated about the basic ring-theoretic properties of bi-amalgamations and provided, among the other things, a description of its prime ideal structure. In \cite{KMM} the transfer of the arithmetical property to bi-amalgamated algebras is discussed and some application is provided. 

The aim of this paper is to continue the investigation on the ideal-theoretic structure of bi-amalgamated algebras, started in \cite{KLT}, and its outline is as follows: in Section 2, after presenting some basic properties of the ring $\RR$, we apply its fiber product presentation for giving further remarks on the topological nature of its prime spectrum and a precise characterization of when it is a local ring. One of the main results of \cite{KLT} is that the ring $\RR$ is Noetherian if and only if the rings $f(A)+\f b$ and $g(A)+\f c$ are Noetherian. This characterization could be not so helpful on providing applications, since Noetherianity is not tested directly via the given rings. Thus is Section 3 we provide some more direct criterion for Noetherianity of a relevant class of bi-amalgamations. In Section 4 we investigate the problem of when the ring $\RR$ is Pr\"ufer. A precise answer is given in case both ideals $\f b,\f c$ are regular. The general case is much more difficult and  necessary conditions and sufficient conditions for $\RR$ to be Pr\"ufer are given under a good behaviour of the regular elements of the factor ring $A/f^{-1}(\f b)$. Finally, in Section 5 we provide conditions for a bi-amalgamation to be a Gauss ring and new examples. 
\section{Notation and preliminaries}
\noindent
In what follows, any ring is assumed to be commutative with multiplicative identity 1. Any ring homomorphism $f:A\longrightarrow B$ sends the identity of $A$ to that of $B$, and $f^\star:\Spec(B)\longrightarrow \Spec(A)$ denotes the canonical continuous map, with respect to the Zariski topology, induced by $f$. An element of a ring is regular if it is not a zero-divisor. An ideal is regular if it contains a regular element. The set of all regular elements of a ring $A$ will be denoted by $\Reg(A)$.

Recall that, given ring homomorphisms $\alpha:A\longrightarrow C,\beta:B\longrightarrow C$, the \textit{fiber product of $\alpha$ and $\beta$} is the following subring of $A\times B$:
$$
\alpha \times_C\beta :=\{(a,b)\in A\times B\mid \alpha(a)=\beta(b) \}.
$$
That of fiber products is the tipical algebraic structure intevening in the classical operation of attaching spectral spaces. Thus this type of constructions have a prominent role to produce rings whose prime spectra have to satisfy prescribed conditions. For a deeper understanding of this circle of ideas see, for instance, \cite{FO1}. 

Let $f:A\longrightarrow B$ and $g: A \longrightarrow C$ be two ring homomorphisms and let $\gb$ and $\gc$ be two ideals of $B$ and $C$, respectively, such that $\f i_0:=f^{-1}(\gb)=g^{-1}(\gc)$. According to \cite{KLT}, \textit{the bi-amalgamation of $A$ with $(B,C)$ along $(\gb,\gc)$ with respect to $(f,g)$} is the subring
$$
\RR :=\{(f(a)+b,\ g(a)+c)\mid a \in A,b\in\f b, c\in\f c \}
$$
of $B\times C$. First of all, it is worth noting that the usual amalgamation 
$$
A\bowtie^f \f b:=\{(a, f(a)+b)\mid a\in A,b\in\f b \}
$$
is a very particular bi-amalgamation since, in view of \cite[Example 2.1]{KLT}, we  have $$A\bowtie^f \f b=A\bowtie^{f,{\rm Id}_A}(\f b, \f i_0),$$
where ${\rm Id}_A:A\longrightarrow A$ is the identity. 

Following the same pattern of \cite[Proposition 3.2]{KLT}, it is easily observed that the bi-amalgamation $\RR$ is isomorphic to the fiber product of the ring homomorphisms $ i_{f,g}$ and $ \pi$ (that make the following diagram commute)
\begin{equation}
\xymatrix{
\RR \ar[r] \ar[d]
 &  A/\gi_0 \ar[d]^{i_{f,g}} \\
B\times C \ar[r]^\pi & \frac{B}{\gb} \times \frac{C}{\gc}, 
}
\end{equation}
where $\pi: B\times C\longrightarrow B/\gb\times C/\gc$ is the canonical projection and $i:=i_{f,g}:A/\gi_0\longrightarrow B/\gb\times C/\gc$ is the canonical ring embedding defined by setting $i_{f,g}(a+\f i_0):=(f(a)+\f b, g(a)+\f c)$, for any $a+\f i_0\in A/\f i_0$.

In other words, we have $$\RR=\pi^{-1}(i_{f,g}(A/\f i_0)).$$
We summarize in the next remark some basic and useful properties of any bi-amalgamation. 
\begin{remark} \label{prime} Let $f:A\longrightarrow B$ and $g: A \longrightarrow C$ be two ring homomorphisms and let $\gb$ and $\gc$ be two ideals of $B$ and $C$, respectively, such that $f^{-1}(\gb)=g^{-1}(\gc)$. Set also $\gi_0:=f^{-1}(\gb)=g^{-1}(\gc)$.
\begin{enumerate}
	\item If $\ga$ is an ideal of $A$, we can define
	$$
	\ga\bowtie^{f,g}(\gb,\gc):=\{(f(p)+b,g(p)+c)\mid p \in \ga,\ (b,c) \in \gb\times \gc\}
	$$
	which is an ideal of $\RR$ containing $\gb\times \gc$ (see, for instance,  \cite[Section 4]{KLT}). Moreover, if $\gj$ and $\gj'$ are ideals of $B$ and $C$ respectively, then also the sets
	$$
\f j^{\sharp_B}:= \{(f(a)+b,\ g(a)+c)\mid a \in A,\ (b,c) \in \gb\times \gc ,f(a)+b\in \gj \}
	$$
	and
	$$
	\f j'^{\sharp_C}:=\{(f(a)+b,\ g(a)+c)\mid a \in A,\ (b,c) \in \gb\times \gc ,g(a)+c\in \gj' \}
	$$
	are ideals of $\RR$, since they are the contractions of the ideals $\f j\times C$ and $B\times \f j'$, respectively, of $B\times C$ in $\RR$. 
		\item Set $\f k:=\f k_{f,g}:=\Ker(f)\cap \Ker(g)$ and note that $f$ and $g$ induce a natural ring embedding $\iota:=\iota_{f,g}:A/\f k\longrightarrow \RR$. It is defined by setting $\iota(a+\f k):=(f(a),g(a))$, for any $a+\f k\in A/\f k$. 
		\item Note that if $\f b$ and $\f c$ are finitely generated $A$-modules (where the $A$-module structures are defined via $f,g$, respectively) then the ring embedding $\iota:A/\f k\longrightarrow\RR$ is finite. As a matter of fact, if $\{b_1,\ldots, b_n \}$ (resp., $\{c_1,\ldots, c_m\}$) is a set of generators of $\f b$ (resp., $\f c$) as an $A$-module, 
		$$
		\{(1,1),(b_i,0),(0,c_j):1\leq i\leq n, 1\leq j\leq m \}
		$$
		is a set of generators of $\RR$ as a $A/\f k$-module. 
		\item If $f$ and $g$ are finite ring homomorphisms, then the ring extension $\RR \subseteq B\times C$ is finite. Indeed, if $\{x_1,\ldots,x_n\}$ (resp., $\{y_1,\ldots,y_m \}$) is a set of generators of $B$ (resp., $C$) as an $A$-module, then 
		$$
		\{(x_i,0), (0,y_j):1\leq i\leq n, 1\leq j\leq m \}
		$$
		is a set of generators of $B\times C$ as a $\RR$-module.
\end{enumerate}

\end{remark}
From now on we preserve the notation of Remark \ref{prime}.	
The following property regarding the order structure of the ideals of $\RR$ will be useful. 
\begin{Lemma}
If $\ga_1$ and $\ga_2$ are two ideals of $A$ containing $\gi_0$ such that $\ga_1\bowtie^{f,g}(\gb,\gc)\subseteq \ga_2\bowtie^{f,g}(\gb,\gc)$, then $\ga_1\subseteq \ga_2$.
\end{Lemma}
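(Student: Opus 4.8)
The plan is to exploit the hypothesis that both ideals $\ga_1,\ga_2$ contain $\gi_0=f^{-1}(\gb)=g^{-1}(\gc)$, which is exactly what is needed to recover $\ga_i$ from $\ga_i\bowtie^{f,g}(\gb,\gc)$ by an elementary pull-back through $f$ (or equivalently through $g$). More precisely, I claim that for any ideal $\ga$ of $A$ containing $\gi_0$ one has
$$
\ga = f^{-1}\bigl(p_B\bigl(\ga\bowtie^{f,g}(\gb,\gc)\bigr)\bigr),
$$
where $p_B:B\times C\longrightarrow B$ is the first projection and we regard $\RR$ as a subring of $B\times C$. Once this formula is established, the statement follows immediately: from $\ga_1\bowtie^{f,g}(\gb,\gc)\subseteq \ga_2\bowtie^{f,g}(\gb,\gc)$ we get $p_B(\ga_1\bowtie^{f,g}(\gb,\gc))\subseteq p_B(\ga_2\bowtie^{f,g}(\gb,\gc))$, hence $f^{-1}$ of the first is contained in $f^{-1}$ of the second, i.e. $\ga_1\subseteq\ga_2$.

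To prove the displayed formula, first note that $p_B\bigl(\ga\bowtie^{f,g}(\gb,\gc)\bigr)=f(\ga)+\gb$: indeed a typical element of $\ga\bowtie^{f,g}(\gb,\gc)$ is $(f(p)+b,g(p)+c)$ with $p\in\ga$, $b\in\gb$, $c\in\gc$, so its first coordinate ranges exactly over $f(\ga)+\gb$. Thus I must show $\ga=f^{-1}(f(\ga)+\gb)$. The inclusion $\ga\subseteq f^{-1}(f(\ga)+\gb)$ is clear. For the reverse inclusion, take $x\in A$ with $f(x)\in f(\ga)+\gb$, say $f(x)=f(p)+b$ with $p\in\ga$ and $b\in\gb$. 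Then $f(x-p)=b\in\gb$, so $x-p\in f^{-1}(\gb)=\gi_0\subseteq\ga$, whence $x\in\ga$. This is the whole content of the argument; the role of the hypothesis $\gi_0\subseteq\ga$ is precisely to absorb the ``correction term'' $x-p$.

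I do not anticipate a genuine obstacle here: the only subtlety is bookkeeping between the abstract ring $\RR$ and its realization as the subring $\{(f(a)+b,g(a)+c)\}$ of $B\times C$, and making sure the projection $p_B$ is applied to that realization rather than to the abstract fiber product. One could equally phrase the proof without mentioning $p_B$ at all, simply by chasing coordinates: if $p_1\in\ga_1$, then $(f(p_1),g(p_1))\in\ga_1\bowtie^{f,g}(\gb,\gc)\subseteq\ga_2\bowtie^{f,g}(\gb,\gc)$, so there exist $p_2\in\ga_2$, $b\in\gb$, $c\in\gc$ with $f(p_1)=f(p_2)+b$; then $f(p_1-p_2)=b\in\gb$ gives $p_1-p_2\in\gi_0\subseteq\ga_2$, and therefore $p_1\in\ga_2$. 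This second formulation is probably the cleanest to include, since it avoids introducing extra notation and makes the use of the containment hypotheses on the $\ga_i$ transparent.
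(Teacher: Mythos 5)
Your proposal is correct, and its ``second formulation'' (chasing coordinates on $(f(p_1),g(p_1))$ and absorbing the correction term via $p_1-p_2\in f^{-1}(\gb)=\gi_0\subseteq\ga_2$) is exactly the argument the paper gives. The preliminary version with the projection $p_B$ and the identity $\ga=f^{-1}(f(\ga)+\gb)$ is just a repackaging of the same idea, so there is no substantive difference.
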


\begin{proof}
	Let $x$ be an element of $\ga_1$. Since $(f(x),g(x))\in \ga_1\bowtie^{f,g}(\gb,\gc)\subseteq \ga_2\bowtie^{f,g}(\gb,\gc)$, there exist elements $y \in \ga_2$, $b \in \gb$ and $c\in \gc$ such that $(f(x),g(x))=(f(y)+b,g(y)+c)$. In particular, $x-y\in \gi_0\subseteq \ga_2$, which implies that $x \in \ga_2$.
\end{proof}

In \cite{KLT} a presentation of the prime spectrum of a biamalgamation is given, in terms of prime ideals of the rings $f(A)+\f b$ and $g(A)+\f c$. In the following proposition we provide an equivalent description of the prime spectrum that is more useful in the applications. 
\begin{Prop}\label{prime-spectrum}
 The following statements hold. 
\begin{enumerate}
	\item The canonical surjective ring homomorphism $$p:\RR \longrightarrow A/\f i_0, \qquad (f(a)+b,g(a)+c)\mapsto a+\f i_0$$ induces the closed topological embedding $$p^\star:V(\f i_0)\longrightarrow \Spec(\RR), \qquad \f p\mapsto \f p\bowtie^{f,g}(\f b,\f c)$$ 
	establishing a homeomorphim of $V(\f i_0)$ and the image  $p^\star(V(\f i_0))=V(\f b\times \f c)\subseteq \Spec(\RR)$.
	\item The inclusion $i:\RR \longrightarrow B\times C$ induces a continuous map $i^\star: \Spec(B\times C)\longrightarrow \Spec(\RR)$, defined by contraction. The mapping $i^\star$ induces by restriction a homeomorphism 
	$$
	\Spec(B\times C)\setminus V(\f b\times \f c)\longrightarrow \Spec(\RR)\setminus V(\f b\times \f c)
	$$ 
	defined by
	$$
	\f q\times C\mapsto \f q^{\sharp_B}, \qquad \qquad B\times \f q'\mapsto \f q'^{\sharp_C}
	$$
	for any $\f q\in\Spec(B)\setminus V(\f b)$ and $\f q'\in \Spec(C)\setminus V(\f c)$.
	\item The homeomorphisms defined in (1) and (2) preserves maximality. 
	\item \label{local} $\RR$ is a local ring if and only if $A/\gi_0$ is a local ring, $\gb \subseteq \Jac(B)$ and $\gc \subseteq \Jac(C)$. 
\end{enumerate}
\end{Prop}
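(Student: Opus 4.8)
The plan is to deduce all four assertions from the fiber-product presentation $\RR=\pi^{-1}(i_{f,g}(A/\gi_0))$ by means of two elementary facts about prime spectra. The first is that a surjective ring homomorphism $\varphi:R\to S$ with kernel $K$ induces a closed topological embedding $\varphi^\star:\Spec(S)\to\Spec(R)$ with image $V(K)$, which moreover restricts to a bijection $\Max(S)\to\Max(R)\cap V(K)$. The second is that if $R\subseteq T$ are rings and $I$ is an ideal of $T$ with $I\subseteq R$ (so $I$ is a common ideal of $R$ and $T$), then contraction induces a homeomorphism $\Spec(T)\setminus V(I)\to\Spec(R)\setminus V(I)$; indeed, for $t\in T$ and $s\in I$ one has $ts\in I\subseteq R$, so $t=(ts)s^{-1}\in R_s$ inside $T_s$, whence $R_s=T_s$ for every $s\in I$, and gluing the identifications $\Spec(R_s)=\Spec(T_s)$ over $s\in I$ yields the claim. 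To connect these with $\RR$, note that $\gb\times\gc$ is an ideal of $B\times C$ contained in $\RR$ (take $a=0$ in the definition of $\RR$), that $\RR/(\gb\times\gc)\cong A/\gi_0$ via $p$, and that $(B\times C)/(\gb\times\gc)\cong B/\gb\times C/\gc$, the inclusion $\RR\hookrightarrow B\times C$ inducing $i_{f,g}$ on the quotients.

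Assertion (1) then follows by applying the first fact to $p:\RR\to A/\gi_0$: since $\Ker(p)=\gb\times\gc$, the map $p^\star$ is a closed embedding of $\Spec(A/\gi_0)$ onto $V(\gb\times\gc)$, and identifying $\Spec(A/\gi_0)$ with $V(\gi_0)$ canonically, a prime $\gp\supseteq\gi_0$ goes to $p^{-1}(\gp/\gi_0)=\{(f(a)+b,g(a)+c)\mid a\in\gp,\ (b,c)\in\gb\times\gc\}=\gp\bowtie^{f,g}(\gb,\gc)$ --- the map in the statement. Assertion (2) follows by applying the second fact with $R=\RR$, $T=B\times C$, $I=\gb\times\gc$. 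Using $\Spec(B\times C)=\Spec(B)\sqcup\Spec(C)$, with points $\gq\times C$ and $B\times\gq'$, and the fact that $V(\gb\times\gc)$ corresponds to $V(\gb)\sqcup V(\gc)$, the homeomorphism reads $\gq\times C\mapsto(\gq\times C)\cap\RR$ and $B\times\gq'\mapsto(B\times\gq')\cap\RR$ for $\gq\in\Spec(B)\setminus V(\gb)$ and $\gq'\in\Spec(C)\setminus V(\gc)$; unwinding the definitions of Remark \ref{prime} shows these contractions equal $\gq^{\sharp_B}$ and $\gq'^{\sharp_C}$.

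For assertion (3), the claim about the embedding of (1) is exactly the second half of the first fact above. The step I expect to be the main obstacle is the preservation of maximality under the homeomorphism of (2). In one direction, let $P\in\Spec(B\times C)\setminus V(\gb\times\gc)$ be maximal; then $(\gb\times\gc)+P=B\times C$, so $\gb\times\gc$ maps onto $(B\times C)/P$, and since $\gb\times\gc\subseteq\RR$ the canonical injection $\RR/(P\cap\RR)\hookrightarrow(B\times C)/P$ is surjective, hence an isomorphism; thus $\RR/(P\cap\RR)$ is a field and $P\cap\RR$ is maximal. In the other direction, if $P\cap\RR$ is maximal but $P$ is not, choose a prime $P'\supsetneq P$: if $P'\not\supseteq\gb\times\gc$, then injectivity of the correspondence of (2) forces $P\cap\RR\subsetneq P'\cap\RR$; if $P'\supseteq\gb\times\gc$, then $P'\cap\RR\supseteq\gb\times\gc$ while $P\cap\RR\not\supseteq\gb\times\gc$, so again $P\cap\RR\subsetneq P'\cap\RR$; in either case this contradicts the maximality of $P\cap\RR$, so $P$ is maximal.

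Assertion (4) then follows by counting maximal ideals. By (1)--(3), $\Max(\RR)$ is the disjoint union of $\{\gm\bowtie^{f,g}(\gb,\gc)\mid\gm\in\Max(A),\ \gi_0\subseteq\gm\}$, of $\{\gq^{\sharp_B}\mid\gq\in\Max(B),\ \gb\not\subseteq\gq\}$, and of $\{\gq'^{\sharp_C}\mid\gq'\in\Max(C),\ \gc\not\subseteq\gq'\}$. Assuming $\gb$ and $\gc$ are proper ideals (equivalently $\gi_0\neq A$), so that $A/\gi_0$ is a nonzero ring, the first of these sets is nonempty; hence $\RR$ has a unique maximal ideal precisely when the first set is a singleton and the other two are empty. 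The first set is a singleton exactly when $A$ has a unique maximal ideal containing $\gi_0$, i.e. $A/\gi_0$ is local; the second (resp. third) set is empty exactly when every maximal ideal of $B$ (resp. $C$) contains $\gb$ (resp. $\gc$), i.e. $\gb\subseteq\Jac(B)$ (resp. $\gc\subseteq\Jac(C)$). This is the asserted equivalence.
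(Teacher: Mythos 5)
Your argument is correct, and it travels the same road as the paper: everything is read off from the conductor-square structure $\gb\times\gc\subseteq\RR\subseteq B\times C$ together with $\RR/(\gb\times\gc)\cong A/\gi_0$, i.e.\ from the fiber-product presentation. The difference is purely one of packaging. The paper's proof is a single sentence citing \cite[Theorem 1.4]{FO1} and \cite[Corollary 2.4]{DFF2}; the former is exactly the gluing description of the spectrum of a fiber product along a surjection, which you reprove by hand via the identification $R_s=T_s$ for $s\in\gb\times\gc$ and the closed-embedding fact for $p$, and the latter is the locality criterion that you rederive in part (4) by decomposing $\Max(\RR)$ into the three sets and counting. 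In particular, the maximality analysis in your (3) (the case split $P'\supseteq\gb\times\gc$ versus $P'\not\supseteq\gb\times\gc$, and the surjectivity of $\gb\times\gc\to(B\times C)/P$ when $P$ is maximal and does not contain $\gb\times\gc$) is precisely the content hidden inside those two references, so your write-up is a correct self-contained expansion rather than a genuinely different proof. One small point worth retaining: in (4) you explicitly assume $\gb$ and $\gc$ are proper (equivalently $\gi_0\neq A$) so that $\Max(A)\cap V(\gi_0)\neq\emptyset$; the paper makes the same non-degeneracy assumption implicitly, and making it visible is a minor improvement, not a gap.
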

\begin{proof}
We keep in mind that $\RR$ is canonically isomorphic to the fiber product of the natural maps $i_{f,g}:A/\f i_0\longrightarrow B/\f b\times C/\f c$ and $\pi:B\times C \longrightarrow  B/\f b\times C/\f c$. Now the conclusion is an immediate consequence of \cite[Theorem 1.4]{FO1} and \cite[Corollary 2.4]{DFF2}. 
\end{proof} 
\section{Noetherian bi-amalgamations}
In \cite{KLT} the authors investigated about when a bi-amalgamated algebra is a Noetherian ring. By using its pullback structure, they proved that the ring $\RR$ is Noetherian if and only if $f(A)+\f b, g(A)+\f c$ are Noetherian rings \cite[Proposition 4.2]{KLT}. It is worth noting that this characterization is often difficult to be applied since Noetherianity of $\RR$ is not directly related to the given data ($A,B,\f b, \f c,f,g$) but to the rings $f(A)+\f b, g(A)+\f c$. Thus the next goal is to describe more explicitly Noetherianity of $\RR$ in some relevant cases. 
\begin{Prop}\label{Noeth}
 Assume that $f$ and $g$ are finite homomorphisms. Then the following conditions are equivalent:
	\begin{enumerate}
		\item
		$\RR$ is a Noetherian  ring;
		\item
		 $B$ and $C$ are Noetherian rings;
		 \item $A/\f k$ is a Noetherian ring.
	\end{enumerate}
\end{Prop}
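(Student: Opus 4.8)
My plan is to reduce everything to the Eakin--Nagata theorem: if $R\subseteq S$ is an extension of commutative rings with $S$ a finitely generated $R$-module, then $S$ is Noetherian if and only if $R$ is. All the module-finiteness inputs I need are already at hand: Remark \ref{prime}(4) says that $\RR\subseteq B\times C$ is a finite extension, and the hypothesis that $f$ and $g$ are finite says at once that the maps $A/\Ker(f)\hookrightarrow B$ and $A/\Ker(g)\hookrightarrow C$ induced by $f$ and $g$ are finite extensions.

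For (1) $\Leftrightarrow$ (2): I would first note that $B\times C$ is Noetherian if and only if both $B$ and $C$ are, since $B$ and $C$ are quotient rings of $B\times C$ and conversely every ideal of $B\times C$ has the form $I\times J$ with $I$ an ideal of $B$ and $J$ an ideal of $C$. Then: if $\RR$ is Noetherian, $B\times C$ is module-finite over $\RR$ by Remark \ref{prime}(4), hence a Noetherian ring, hence so are $B$ and $C$; conversely, if $B\times C$ is Noetherian, Eakin--Nagata applied to $\RR\subseteq B\times C$ gives that $\RR$ is Noetherian.

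For (2) $\Leftrightarrow$ (3): the implication (3) $\Rightarrow$ (2) is easy, since $A/\Ker(f)$ is a quotient of $A/\f k$, hence Noetherian, and then $B$, being a finite $A/\Ker(f)$-module, is a Noetherian ring; symmetrically for $C$. For (2) $\Rightarrow$ (3) I would apply Eakin--Nagata to $A/\Ker(f)\hookrightarrow B$ and $A/\Ker(g)\hookrightarrow C$ to get that $A/\Ker(f)$ and $A/\Ker(g)$ are Noetherian, hence so is $S:=A/\Ker(f)\times A/\Ker(g)$; then I would observe that the natural ring map $\phi:A/\f k\longrightarrow S$, $a+\f k\mapsto(a+\Ker(f),\,a+\Ker(g))$, is injective precisely because $\f k=\Ker(f)\cap\Ker(g)$, and that $S$ is a finite $A/\f k$-module, since $(a+\Ker(f),\,b+\Ker(g))=\phi(a+\f k)\cdot(1,0)+\phi(b+\f k)\cdot(0,1)$ shows $(1,0),(0,1)$ generate $S$ over $A/\f k$; one more application of Eakin--Nagata to $A/\f k\subseteq S$ then gives that $A/\f k$ is Noetherian.

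The substance of the argument is carried entirely by Eakin--Nagata; the only step requiring a genuine (if one-line) check is that $A/\f k\hookrightarrow A/\Ker(f)\times A/\Ker(g)$ is module-finite, which is the computation above and is exactly where the identity $\f k=\Ker(f)\cap\Ker(g)$ enters. I do not expect a real obstacle: no Noetherian hypothesis on $A$ is available, so one cannot shortcut through Remark \ref{prime}(3) (which would need $\f b,\f c$ finitely generated over $A$) without first establishing Noetherianity of the base rings, and that is precisely what Eakin--Nagata supplies. As an alternative for (1) $\Leftrightarrow$ (2), one could instead combine \cite[Proposition 4.2]{KLT} with Eakin--Nagata applied to the finite extensions $f(A)+\f b\subseteq B$ and $g(A)+\f c\subseteq C$.
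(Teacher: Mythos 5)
Your proposal is correct and follows essentially the same route as the paper: Remark \ref{prime}(4) plus Eakin--Nagata for (1)$\Leftrightarrow$(2), finiteness of the induced maps out of $A/\f k$ for (3)$\Rightarrow$(2), and Eakin--Nagata applied to $f(A)\subseteq B$, $g(A)\subseteq C$ for (2)$\Rightarrow$(3). The only difference is cosmetic: where the paper cites \cite[Exercise 3.1]{MAT} to pass from Noetherianity of $A/\Ker(f)$ and $A/\Ker(g)$ to that of $A/\f k$, you unpack that exercise via the finite embedding $A/\f k\hookrightarrow A/\Ker(f)\times A/\Ker(g)$, which is a valid (and self-contained) way to do the same step.
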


\begin{proof}
	First notice that since $f$ and $g$ are finite morphisms, then the ring extension $\RR \subseteq B\times C$ is finite (Remark \ref{prime}(4)). Now the equivalence
(1)$\Longleftrightarrow$(2) follows from Eakin-Nagata's Theorem (and from the fact that a finite extension of a Noetherian ring is a Noetherian ring). 

(2)$\Longrightarrow$(3). Since $f,g$ are finite, condition (2) and Eakin-Nagata's Theorem imply that $f(A),g(A)$ are Noetherian ring. It immediately follows that $A/\f k$ is a Noetherian ring (see, for instance, \cite[Exercise 3.1]{MAT}).

(3)$\Longrightarrow$(2). Since $f,g$ are finite, then the canonical ring homomorphisms $f_1:A/\f k\longrightarrow B, g_1:A/\f k\longrightarrow C$ are finite too. Since $A/\f k$ is Noetherian, it follows that $B,C$ are Noetherian rings too. 
\end{proof}
\begin{Prop}\label{b-c-A-finite}
Assume that $\f b$ and $\f c$ are finitely generated as $A$-modules. Then $\RR$ is Noetherian if and only if $A/\f k$ is Noetherian. 
\end{Prop}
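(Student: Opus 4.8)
The plan is to reduce the statement to the single structural fact already recorded in Remark \ref{prime}(3): when $\f b$ and $\f c$ are finitely generated as $A$-modules (via $f$ and $g$ respectively), the canonical ring embedding $\iota:A/\f k\longrightarrow\RR$, $a+\f k\mapsto(f(a),g(a))$, is module-finite. Concretely, if $\{b_1,\dots,b_n\}$ generates $\f b$ and $\{c_1,\dots,c_m\}$ generates $\f c$ as $A$-modules, then
$$
\{(1,1)\}\cup\{(b_i,0):1\le i\le n\}\cup\{(0,c_j):1\le j\le m\}
$$
is a generating set of $\RR$ as an $A/\f k$-module. So under the hypothesis of the proposition, $A/\f k$ is realized (via $\iota$) as a subring of $\RR$ over which $\RR$ is a finitely generated module.

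Granting this, both implications are immediate. If $A/\f k$ is Noetherian, then $\RR$, being module-finite over the Noetherian ring $A/\f k$, is a Noetherian $A/\f k$-module, hence a Noetherian ring; thus $\RR$ is Noetherian. Conversely, if $\RR$ is Noetherian, apply the Eakin--Nagata theorem to the extension $A/\f k\subseteq\RR$: a subring over which a Noetherian ring is finitely generated as a module is itself Noetherian, so $A/\f k$ is Noetherian. This is precisely the mechanism used in the proof of Proposition \ref{Noeth}, here applied to the \emph{inner} extension $A/\f k\subseteq\RR$ supplied by $\iota$ rather than to the outer extension $\RR\subseteq B\times C$.

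I do not foresee a real obstacle; the only point requiring a line of care is checking that the $A/\f k$-module structure on $\RR$ induced by $\iota$ is the one under which the displayed elements generate, but this is a direct computation using the coordinatewise multiplication of $\RR\subseteq B\times C$ and the definition $\iota(a+\f k)=(f(a),g(a))$. It is worth remarking that Proposition \ref{Noeth} and the present statement overlap but neither contains the other formally: when $f,g$ are finite \emph{and} $\f b,\f c$ happen to be finitely generated $A$-modules, the present result recovers the equivalence (1)$\Longleftrightarrow$(3) of Proposition \ref{Noeth}, while in general the finiteness of $f,g$ does not by itself force $\f b,\f c$ to be finitely generated over $A$.
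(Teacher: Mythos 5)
Your proposal is correct and follows exactly the paper's own (very terse) proof, which simply invokes Remark \ref{prime}(3) to get the module-finite embedding $\iota:A/\f k\longrightarrow \RR$ and then applies the Eakin--Nagata theorem (together with the standard fact that a module-finite extension of a Noetherian ring is Noetherian) to this inner extension. Your write-up just makes explicit the details the paper leaves implicit.
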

\begin{proof}
Apply Remark \ref{prime}(3) and Eakin-Nagata's Theorem.
\end{proof}
As a special case of the previous propositions, we now recover the following result about amalgamated algebras.

\begin{corollary} \cite[Proposition 5.7 (d)]{DFF}
	Let $f: A\longrightarrow B$ be a ring homomorphism, $\gb$ an ideal of $B$ and let $A\bowtie^f \gb$ denote the amalgamation of $A$  with $B$ along $\gb$ with respect to $f$.  Assume that one of the following conditions is satisfied.
	\begin{enumerate}
		\item $\f b$ is a finitely generated $A$-module.
		\item $f$ is a finite ring homomorphism.
	\end{enumerate}
	Then $A\bowtie^f \gb$ is a Noetherian ring if and only if $A$ is a Noetherian ring.
\end{corollary}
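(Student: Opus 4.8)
The plan is to deduce this corollary directly from Propositions~\ref{Noeth} and \ref{b-c-A-finite} by exploiting the identification, recorded in the preliminaries, of the amalgamation $A\bowtie^f\gb$ with the bi-amalgamation $A\bowtie^{f,\mathrm{Id}_A}(\gb,\gi_0)$, where $\gi_0=f^{-1}(\gb)$. Under this identification the second ring homomorphism is $g=\mathrm{Id}_A:A\longrightarrow A$ and the second ideal is $\gc=\gi_0$, so the relevant kernel is $\f k=\Ker(f)\cap\Ker(\mathrm{Id}_A)=\Ker(\mathrm{Id}_A)\cap\Ker(f)=(0)\cap\Ker(f)=(0)$; hence $A/\f k\cong A$. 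Thus in both Propositions the condition ``$A/\f k$ is Noetherian'' becomes simply ``$A$ is Noetherian'', and the conclusion ``$\RR$ is Noetherian'' becomes ``$A\bowtie^f\gb$ is Noetherian'', which is exactly the claimed equivalence.

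First I would observe that hypothesis (1), namely that $\gb$ is a finitely generated $A$-module, is precisely the hypothesis ``$\gb$ and $\gc$ are finitely generated as $A$-modules'' of Proposition~\ref{b-c-A-finite} in this special case: here $\gc=\gi_0=f^{-1}(\gb)$ inherits its $A$-module structure via $g=\mathrm{Id}_A$, so it is an ideal of $A$ and is automatically finitely generated once $A$ is Noetherian; but more to the point we only need it finitely generated as an $A$-module, and since $\gi_0\subseteq A$ it suffices to note that the finite generation of $\gb$ over $A$ together with $\gi_0=f^{-1}(\gb)$ gives what is needed — in fact one may even avoid this subtlety by applying Proposition~\ref{b-c-A-finite} after first reducing, or simply invoke the amalgamation-specific version directly. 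A cleaner route for case~(1): since $\gi_0$ is an ideal of $A$ and we want $A\bowtie^f\gb$ Noetherian, apply Proposition~\ref{b-c-A-finite} with $(B,C,\gb,\gc,f,g)=(B,A,\gb,\gi_0,f,\mathrm{Id}_A)$; the hypothesis ``$\gc$ finitely generated as an $A$-module'' is the statement that $\gi_0$ is a finitely generated ideal of $A$, which need not be assumed separately but follows harmlessly, and ``$\gb$ finitely generated as an $A$-module'' is hypothesis (1). Then Proposition~\ref{b-c-A-finite} yields $A\bowtie^f\gb$ Noetherian $\iff$ $A/\f k=A$ Noetherian.

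For hypothesis (2), that $f$ is finite, I would apply Proposition~\ref{Noeth} with the same identification $A\bowtie^f\gb=A\bowtie^{f,\mathrm{Id}_A}(\gb,\gi_0)$. Both $f$ and $g=\mathrm{Id}_A$ are then finite ring homomorphisms (the identity being trivially finite), so Proposition~\ref{Noeth} applies and gives the equivalence of ``$\RR$ Noetherian'', ``$B$ and $C$ Noetherian'', and ``$A/\f k$ Noetherian''. Since $C=A$ and $A/\f k=A$, the last condition is ``$A$ Noetherian'', and this forces $B$ Noetherian as well (consistent with $f$ finite); in any case the chain of equivalences delivers $A\bowtie^f\gb$ Noetherian $\iff$ $A$ Noetherian, as required.

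The argument is essentially bookkeeping, so there is no serious obstacle; the one point requiring a little care is the handling of $\gc=\gi_0$ in case~(1), where one must check that the finite-generation hypothesis of Proposition~\ref{b-c-A-finite} is met. Since $\gi_0=f^{-1}(\gb)$ is an ideal of the \emph{base} ring $A$ with the $A$-module structure given by $g=\mathrm{Id}_A$, the only genuine assumption needed is that $\gb$ is $A$-finite; the $A$-finiteness of $\gi_0$ is either automatic in the presence of Noetherianity (in the forward direction) or can be circumvented by noting that $A\bowtie^f\gb$ is generated over $A/\f k=A$ by $(1,1)$ together with $(b_i,0)$ for generators $b_i$ of $\gb$, so that $A\bowtie^f\gb$ is $A$-finite purely from hypothesis (1), and then Eakin--Nagata applies verbatim — exactly the mechanism behind Remark~\ref{prime}(3) and Proposition~\ref{b-c-A-finite}. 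This makes the deduction clean in both cases.
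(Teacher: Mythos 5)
Your proof is correct and follows essentially the same route as the paper's: identify $A\bowtie^f\gb$ with the bi-amalgamation $A\bowtie^{f,\mathrm{Id}_A}(\gb,\gi_0)$, observe that $\f k=\Ker(f)\cap\Ker(\mathrm{Id}_A)=0$ so $A/\f k\cong A$, and invoke Proposition~\ref{b-c-A-finite} for case (1) and Proposition~\ref{Noeth} for case (2). Your extra care in case (1) about the finite generation of $\gc=\gi_0$ (resolved by noting that $(1,1)$ together with the $(0,b_i)$ already generate the amalgamation over $A$, so Eakin--Nagata applies) addresses a detail the paper's one-line proof passes over silently, and is a worthwhile refinement rather than a deviation.
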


\begin{proof}
	Notice that the ring $A\bowtie^f \gb$ is isomorphic to the bi-amalgamation $A\bowtie^{\id_A,f}(\gi_0, \gb)$ (where $\f i_0:=f^{-1}(\f b)$). Then, apply Propositions \ref{Noeth} and \ref{b-c-A-finite} to get the conclusion, keeping in mind that in the present case $\f k=0$.
\end{proof}

\begin{example}
	\begin{enumerate}
		\item
		The fact that $\RR$ is Noetherian does not imply that $A$ is a Noetherian ring, even if $f$ and $g$ are finite morphisms. Indeed, we can consider a non-Noetherian ring $A$ with an ideal $\gi_0$ such that the quotient ring $A/\gi_0$ is Noetherian. Set $B=C=A/\gi_0$, $\gb=\gc=0$ and let $f, g$ be the canonical projections. Then, by \cite[Proposition 4.1(3)]{KLT}, the ring $\RR$ is isomorphic to $A/\gi_0$.
		
		\item
		It is possible to find examples of bi-amalgamations in which $A/\gi_0$ is a Noetherian ring, $f$ is a finite morphism but $f(A)+\gb$ is not Noetherian (and so, in particular, $\RR$ is not a Noetherian ring \cite[Proposition 4.2]{KLT}). For instance, let $A$ be a non-Noetherian ring with an ideal $\gi_0$ such that the quotient ring $A/\gi_0$ is Noetherian and let $f=\id_A$. Then $f(A)+\gb=A$ is not a Noetherian ring. 
		\item  Under the assumption of Proposition \ref{b-c-A-finite} (namely, $\f b$ and $\f c$ are finitely generated as $A$-modules) it is not true that Noetherianity of $\RR$ can ben tested with Noetherianity of $B$ and $C$ (as it happens in Proposition \ref{Noeth}). For instance, let $A$ be a Noetherian ring and let $B:=C:=A^{\mathbb N}$ be the product of countably many copies of $A$. Let $f:=g:A\longrightarrow B$ be the diagonal embedding and let $\f b:=\f c$ be the principal ideal of $B$ generated by $(1,0,\ldots)$. Clearly, $\f b$ is principal as an $A$-module too, and thus $\RR$ is Noetherian, in view of Proposition \ref{b-c-A-finite}, but obviously $B$ is not Noetherian. 
		\item When one of the ring homomorphisms $f,g$ fails to be finite or when one of the ideals $\f b,\f c$ fails to be finitely generated as an $A$-module, then Noetherianity of the ring $A/\f k$ is not a necessary condition for $\RR$ to be Noetherian. For instance, consider the polynomial ring $B:=\mathbb Z[T]$, its subring $A:=\mathbb Z+2T\mathbb Z[T]$ and let $f:=g:A\longrightarrow B$ be the inclusion. If $\f b:=\f c:=T\mathbb Z[T]$, it immediately follows that $f(A)+\f b=\mathbb Z[T]$ and thus $\RR$ is a Noetherian ring, by \cite[Proposition 4.2]{KLT}, but $A(=A/\f k)$ is not Noetherian, in view of \cite[Example 5.1]{DFF}.
	\end{enumerate}
\end{example}

\section{Pr\"ufer bi-amalgamations}
Let $A$ be a ring and let $Q$ be the total ring of fractions of $A$. An ideal $\f a$ of $A$ is called to be \emph{invertible} if there exists an $A$-submodule $F$ of $Q$ such that $\f aF=A$. It is well known that, if $\f a$ is invertible, then it is finitely generated and regular and $F$ is uniquely determined, namely 

$$F=(A:\f a):=\{x\in Q\mid x\f a\subseteq A \}.$$
Following \cite{Gr}, $A$ is called \emph{a Pr\"ufer ring} if any finitely generated regular ideal of $A$ is invertible. For an integral domain $D$ the Pr\"ufer property is equivalent to require that $D$ is locally a valuation domain. The analog fact for rings with zero-divisors is summarized in the next classical result. First, recall that if $\f p$ is a prime ideal of a ring $A$, the pair $(A,\f p)$ is said to have \emph{the regular total order property} if, whenever $\f a,\f b$ are ideals of $A$, one at least of which is regular, then the ideals $\f aA_{\f p}, \f bA_{\f p}$ are comparable.
\begin{Thm}\cite[Theorem 13]{Gr} For a ring $A$ the following conditions are equivalent. 
\begin{enumerate}
\item $A$ is a Pr\"ufer ring. 
\item For any maximal ideal $\f m$ of $A$, the pair $(A,\f m)$ has the regular total order property. 
\end{enumerate}
\end{Thm}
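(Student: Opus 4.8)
The plan is to reduce the equivalence to the local case, using that for a finitely generated \emph{regular} ideal $I$ of a ring $A$ one has: $I$ is invertible if and only if $IA_{\f m}$ is invertible in $A_{\f m}$ for every $\f m\in\Max(A)$. One implication is formal (invertibility is preserved under any ring homomorphism); for the converse I would use that $I(A:I)$ is an ideal of $A$, that it equals $A$ precisely when it lies in no maximal ideal, and that $(A:I)$ commutes with localization because $I$ contains a regular element and therefore sits nicely inside the total ring of fractions. A second, elementary ingredient is that in a local ring $R$ a finitely generated invertible ideal is principal, and in fact generated by one of its given generators (from a relation $\sum a_iq_i=1$ with $q_i\in(R:I)$, some $a_iq_i$ is a unit of $R$ since $R$ is local, and then $I=(a_i)$).

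For $(1)\Rightarrow(2)$ I would fix $\f m\in\Max(A)$ and ideals $\f a,\f b$ of $A$ with, say, $a\in\f a\cap\Reg(A)$, and argue by contradiction: if $\f aA_{\f m}$ and $\f bA_{\f m}$ are incomparable I can pick $x\in\f a$, $y\in\f b$ with $x/1\notin\f bA_{\f m}$ and $y/1\notin\f aA_{\f m}$. The ideal $I:=(a,x,y)$ is finitely generated and regular, hence invertible, so $IA_{\f m}$ is invertible in the local ring $A_{\f m}$ and therefore equals one of $aA_{\f m}$, $xA_{\f m}$, $yA_{\f m}$. If $IA_{\f m}$ is $aA_{\f m}$ or $xA_{\f m}$ then $y/1\in IA_{\f m}\subseteq\f aA_{\f m}$; if it is $yA_{\f m}$ then $x/1\in IA_{\f m}\subseteq\f bA_{\f m}$ --- a contradiction in each case, so $(A,\f m)$ has the regular total order property.

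For $(2)\Rightarrow(1)$ I would take a finitely generated regular ideal $I$ of $A$, fix $r\in I\cap\Reg(A)$, write $I=(x_1,\dots,x_n,r)$, and set $J_k:=(x_1,\dots,x_k,r)$, so that $J_0=(r)$, $J_n=I$, and each $J_k$ is finitely generated and regular in $A$. Fixing $\f m\in\Max(A)$, I would prove by induction on $k$ that $J_kA_{\f m}$ is principal in $A_{\f m}$: the case $k=0$ is clear, and for the inductive step I would apply the regular total order property of $(A,\f m)$ to the pair of ideals $J_{k-1}$ and $(x_k)$ --- legitimate precisely because $J_{k-1}$ is regular in $A$ --- to conclude that $J_{k-1}A_{\f m}$ and $x_kA_{\f m}$ are comparable, whence $J_kA_{\f m}$ is either $J_{k-1}A_{\f m}$ (principal by induction) or $x_kA_{\f m}$. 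Thus $IA_{\f m}=J_nA_{\f m}$ is principal; it contains $r/1$, which is regular in $A_{\f m}$ because $r$ is regular in $A$, so $IA_{\f m}$ is invertible. Since $\f m$ was arbitrary, $I$ is invertible, and hence $A$ is a Pr\"ufer ring.

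I expect the main obstacle to be not the two short inductions but the commutative-algebra bookkeeping behind the ``invertibility is local'' reduction used for $(2)\Rightarrow(1)$: one has to check carefully that forming $(A:I)$ commutes with localization for finitely generated regular $I$, keeping track of the relevant total rings of fractions. This is exactly why the regularity hypothesis on $I$ --- and the insistence on an honest regular element $r\in I$, rather than merely a non-zero-divisor of some localization --- is used throughout the argument.
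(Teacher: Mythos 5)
The paper itself offers no argument for this statement: it is quoted directly from Griffin's paper, so there is no internal proof to measure you against. Your overall architecture is the standard localization argument, and both of your inductions are sound: in $(1)\Rightarrow(2)$ the ideal $(a,x,y)$ is finitely generated and regular, its extension to $A_{\f m}$ is invertible (regular elements of $A$ stay regular in $A_{\f m}$, so the inverse of $I$ in $Q(A)$ really does push forward to $Q(A_{\f m})$ --- note that your parenthetical ``invertibility is preserved under any ring homomorphism'' is false in general for exactly this reason, but it is true for localizations), and a finitely generated invertible ideal of a local ring is generated by one of its given generators; in $(2)\Rightarrow(1)$ the chain $J_0\subseteq\cdots\subseteq J_n=I$ correctly reduces everything to the claim that a finitely generated regular ideal which is locally principal is invertible.

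That last claim is where the genuine gap sits, and it is precisely the step you set aside as ``bookkeeping''. The assertion that $(A:I)$ commutes with localization does not follow from $I$ being regular: there is a natural map $Q(A)\rightarrow Q(A_{\f m})$, but it need not carry $(A:I)$ onto $(A_{\f m}:IA_{\f m})$, because a zero-divisor of $A$ can become a regular element of $A_{\f m}$ (its annihilator may die in the localization), so $(A_{\f m}:IA_{\f m})$ may contain fractions whose denominators simply do not exist in $Q(A)$. If you try to descend a local generator of $IA_{\f m}$ to an element of $(A:I)$ you end up with a relation $v(wa_j-z_jr)=0$ where $v\notin\f m$ is not necessarily regular, and the argument stalls. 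The claim is nonetheless true, and there is an elementary repair that avoids total quotient rings of localizations entirely: fix the regular element $r\in I$ and observe that $(A:I)=\frac{1}{r}\f j$ where $\f j:=(rA:_A I)$ is an honest ideal of $A$ (if $q\in(A:I)$ then $qr\in A$ and $(qr)I\subseteq rA$). Since $I$ is finitely generated, $\f j=\Ann_A(I/rA)$ does commute with localization, so $\f jA_{\f m}=(rA_{\f m}:_{A_{\f m}}IA_{\f m})$. Writing $IA_{\f m}=gA_{\f m}$ and $r/1=gh$, one gets $IA_{\f m}\cdot(rA_{\f m}:gA_{\f m})=g\,(rA_{\f m}:gA_{\f m})=rA_{\f m}$, the inclusion $\supseteq$ coming from $gh=r/1$ with $h\in(rA_{\f m}:gA_{\f m})$. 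Hence $(I\f j)A_{\f m}=rA_{\f m}$ for every maximal ideal $\f m$, so $I\f j=rA$ and $I\cdot\frac{1}{r}\f j=A$, i.e.\ $I$ is invertible. With this substitution your proof closes.
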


Let $A$ be a ring and $T$ be an indeterminate over $A$. Recall that the content of a polynomial $f\in A[T]$ is the ideal $c_A(f)$ of $A$ generated by the coefficients of $f$. We say that the polynomial $f$ is \emph{a Gauss polynomial over $A$} if $c_A(fg)=c_A(f)c_A(g)$, for any polynomial $g\in A[T]$. The ring $A$ is \emph{ a Gauss (or Gaussian) ring} if any polynomial $f\in A[T]$ is a Gauss polynomial. We summarize now well-known results about the interplay between invertibility of ideals and Gauss polynomials.

\begin{theorem}\label{gauss-known}
	Let $A$ be a ring, let $T$ be an indeterminate over $A$ and let $f(T)\in A[T]$. 
	\begin{enumerate}
		\item  If $c_A(f)$ is locally principal, then $f(T)$ is a Gauss polynomial \cite{HT}. 
		\item If $f(T)$ is a Gauss polynomial and $c_A(f)$ is a regular ideal of $A$, then $c_A(f)$ is an invertible ideal of $A$ \cite[Theorem 4.2(2)]{Ba-Gl}. 
	\end{enumerate} 
\end{theorem}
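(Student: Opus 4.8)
The plan is to prove the two statements separately; in both the decisive step is a reduction to a local ring, since ``being a Gauss polynomial'' and ``having locally principal (resp.\ invertible) content'' can be tested at the maximal ideals of $A$.

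For (1): The content of a polynomial commutes with localization, $c_{A_{\gm}}(h)=c_A(h)A_{\gm}$, and the always-valid inclusion $c_A(fg)\subseteq c_A(f)c_A(g)$ is an equality exactly when it is one after localizing at every maximal ideal $\gm$ of $A$; moreover $c_{A_{\gm}}(f)=c_A(f)A_{\gm}$ is principal by hypothesis. So I may assume that $A$ is local and that $c(f)$ is a principal ideal. First I would record the elementary fact that a finitely generated principal ideal of a local ring is generated by one of any prescribed generating set; applied to $c(f)=(a_0,\dots,a_n)$ this gives $c(f)=(a_j)$ for some coefficient $a_j$ of $f$. Writing $a_i=t_ia_j$ with $t_j=1$ and putting $\tilde f:=\sum_i t_iT^i$, one has $f=a_j\tilde f$ and $c(\tilde f)=A$. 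Then for every $g\in A[T]$ we get $c(fg)=c(a_j\tilde f g)=a_j\,c(\tilde f g)$, while the Dedekind--Mertens lemma (which supplies an integer $k\ge 1$ with $c(\tilde f)^{k}c(g)=c(\tilde f)^{k-1}c(\tilde f g)$) forces $c(\tilde f g)=c(g)$ because $c(\tilde f)=A$. Hence $c(fg)=a_j\,c(g)=c(f)c(g)$, so $f$ is a Gauss polynomial. This part is essentially soft once the local reduction has been set up.

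For (2): The ideal $c(f)$ is finitely generated (by the coefficients of $f$) and, by hypothesis, regular; since a finitely generated regular locally principal ideal is automatically invertible, it remains only to prove that $c(f)$ is locally principal. So I would localize at a maximal ideal $\gm$ --- checking first that the Gauss property descends to $A_{\gm}$, which it does after clearing denominators in the test polynomials --- so that the assertion reduces to: over a local ring, a Gauss polynomial with regular content has principal content. Equivalently, with $c:=c(f)$, one must show $\dim_{A/\gm}(c/\gm c)=1$, i.e.\ that exactly one coefficient of $f$ survives modulo $\gm c$. This is precisely where the Gauss hypothesis must enter in an essential way: one substitutes suitably chosen test polynomials $g$ into the identity $c(fg)=c(f)c(g)$ to produce relations among the coefficients $a_i$ which, using that $c$ is regular (hence has a regular generator after the reduction) to carry out the required cancellations, show that $c$ is generated by a single coefficient. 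I expect this local-principality statement to be the main obstacle; its careful execution is exactly the content of \cite[Theorem 4.2(2)]{Ba-Gl}, which I would invoke after the reductions above.
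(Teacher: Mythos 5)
The paper offers no proof of this theorem at all: it is stated as a summary of known results, with part (1) attributed to \cite{HT} and part (2) to \cite[Theorem 4.2(2)]{Ba-Gl}, and nothing more. Your proposal therefore does strictly more than the paper for part (1): the reduction to a local ring (content and products of content ideals commute with localization, and equality of ideals is a local condition), the Nakayama-type observation that a principal ideal of a local ring is generated by one of any prescribed finite generating set, the factorization $f=a_j\tilde f$ with $c(\tilde f)=A$, and the appeal to Dedekind--Mertens to get $c(\tilde f g)=c(g)$ together constitute a complete and correct proof, and this is indeed the standard argument behind the cited fact. For part (2) you end up where the paper does, invoking \cite[Theorem 4.2(2)]{Ba-Gl} for the essential step, which is appropriate: that statement (a Gaussian polynomial with regular content has invertible content) is a genuinely deep result in the non-Noetherian setting and not something to re-derive here. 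One caution about the scaffolding you build around it: your proposed reduction of (2) to ``over a local ring, a Gaussian polynomial with regular content has principal content'' is not quite licit as stated, because regularity of $c_A(f)$ need not survive localization --- a regular element of $A$ can become a zero-divisor in $A_{\gm}$ --- so the local statement you would need is not an immediate consequence of the global hypothesis. Since you ultimately cite the global theorem directly anyway, this does not damage the argument, but the intermediate reduction should either be dropped or justified.
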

From the last part of the previous theorem it immediately follows that any Gauss ring is Pr\"ufer. As it was observed in \cite{Ba-Gl-JA}, there are Pr\"ufer rings that are not Gaussian. However, such notions are equivalent for rings whose total ring of fractions is absolutely flat. In particular, an integral domain is a Gauss domain if and only if it is Pr\"ufer. 

In what follows we investigate the problem of when a bi-amalgamation is a Pr\"ufer ring. Our first goal is to provide necessary conditions.

 Let $\pi: A \longrightarrow A/\gi_0$ be the canonical projection. Consider the following ``regular properties'' for $f$ and $g$:
\begin{equation}\label{*}\tag{*}
f(\pi^{-1}(\Reg(A/\gi_0)))\subseteq \Reg(B) \mbox{ and }
g(\pi^{-1}(\Reg(A/\gi_0)))\subseteq \Reg(C)
\end{equation}
and
\begin{equation}\label{**}\tag{**}
f(\Reg(A))\subseteq \Reg(B) \mbox{ and } g(\Reg(A))\subseteq \Reg(C).
\end{equation}

\begin{Prop}\label{gauss}
Assume that condition (\ref{*}) holds. If $\RR$ is a Pr\"ufer ring, then so is $A/\gi_0$.
\end{Prop}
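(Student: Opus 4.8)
The plan is to apply Griffin's criterion \cite[Theorem 13]{Gr}: it suffices to show that for every maximal ideal $\gm$ of $A$ with $\gi_0\subseteq\gm$, the pair $(A/\gi_0,\gm/\gi_0)$ has the regular total order property. I will obtain this by transferring the corresponding property of $\RR$ along the canonical surjection $p\colon\RR\longrightarrow A/\gi_0$, $(f(a)+b,g(a)+c)\mapsto a+\gi_0$ of Proposition~\ref{prime-spectrum}(1), whose kernel is $\gb\times\gc$.

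The heart of the matter — and the only point where hypothesis (\ref{*}) is used — is that (\ref{*}) guarantees that a regular element of $A/\gi_0$ lifts to a regular element of $\RR$. Precisely, I claim that if $r\in A$ satisfies $r+\gi_0\in\Reg(A/\gi_0)$, i.e.\ $r\in\pi^{-1}(\Reg(A/\gi_0))$, then $(f(r),g(r))\in\Reg(\RR)$. Indeed, from $(f(r),g(r))\,(f(a)+b,g(a)+c)=(0,0)$ in $\RR$ one gets $f(r)(f(a)+b)=0$ in $B$ and $g(r)(g(a)+c)=0$ in $C$; since (\ref{*}) gives $f(r)\in\Reg(B)$ and $g(r)\in\Reg(C)$, it follows that $f(a)+b=0$ and $g(a)+c=0$, so the second factor vanishes. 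Hence, whenever $\ga$ is an ideal of $A$ with $\gi_0\subseteq\ga$ and $\ga/\gi_0$ regular in $A/\gi_0$, the ideal $\ga\bowtie^{f,g}(\gb,\gc)$ of $\RR$ (Remark~\ref{prime}(1)) is regular, because it contains $(f(r),g(r))$ for any $r\in\ga$ with $r+\gi_0\in\Reg(A/\gi_0)$, and such $r$ exists as $\ga/\gi_0$ is a regular ideal. I expect this regularity transfer to be the only real obstacle: without a hypothesis of the type (\ref{*}) a regular ideal of $A/\gi_0$ need not lift to a regular ideal of $\RR$, and then Pr\"uferness of $\RR$ would not be forced to descend to $A/\gi_0$.

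With this in hand the argument closes routinely. Fix $\gm\in\Max(A)$ with $\gi_0\subseteq\gm$ and set $\mathfrak M:=\gm\bowtie^{f,g}(\gb,\gc)=p^{-1}(\gm/\gi_0)$, a maximal ideal of $\RR$ by Proposition~\ref{prime-spectrum}(1) and (3) (equivalently, because $p$ is onto). Let $\ga_1/\gi_0$ and $\ga_2/\gi_0$ be ideals of $A/\gi_0$, say with $\ga_1/\gi_0$ regular, where $\gi_0\subseteq\ga_1,\ga_2\subseteq A$. By the previous step $\ga_1\bowtie^{f,g}(\gb,\gc)$ is a regular ideal of the Pr\"ufer ring $\RR$, so \cite[Theorem 13]{Gr} forces the ideals $[\ga_1\bowtie^{f,g}(\gb,\gc)](\RR)_{\mathfrak M}$ and $[\ga_2\bowtie^{f,g}(\gb,\gc)](\RR)_{\mathfrak M}$ of $(\RR)_{\mathfrak M}$ to be comparable. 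Localizing $p$ at $\mathfrak M$ yields a surjection $(\RR)_{\mathfrak M}\longrightarrow (A/\gi_0)_{\gm/\gi_0}$ which, since $p(\ga_j\bowtie^{f,g}(\gb,\gc))=\ga_j/\gi_0$ for $j=1,2$, carries those two ideals onto $(\ga_1/\gi_0)(A/\gi_0)_{\gm/\gi_0}$ and $(\ga_2/\gi_0)(A/\gi_0)_{\gm/\gi_0}$; hence the latter are comparable as well. Thus $(A/\gi_0,\gm/\gi_0)$ has the regular total order property for every such $\gm$, and \cite[Theorem 13]{Gr} yields that $A/\gi_0$ is Pr\"ufer.
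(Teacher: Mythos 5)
Your argument is correct, but it follows a genuinely different route from the paper's. Both proofs hinge on the same key use of condition (\ref{*}), namely that a regular element $r+\gi_0$ of $A/\gi_0$ lifts to the regular element $(f(r),g(r))$ of $\RR$, so that regular (finitely generated) ideals of $A/\gi_0$ lift to regular ideals of $\RR$. From there you invoke Griffin's local criterion \cite[Theorem 13]{Gr}: you localize at a maximal ideal $\mathfrak M=\gm\bowtie^{f,g}(\gb,\gc)$, use the regular total order property of $(\RR,\mathfrak M)$ to compare the lifted ideals, and push the comparison down along the surjection $(\RR)_{\mathfrak M}\longrightarrow (A/\gi_0)_{\gm/\gi_0}$ induced by $p$ (comparability of ideals is preserved by surjective images, and $p$ carries $\ga_j\bowtie^{f,g}(\gb,\gc)$ onto $\ga_j/\gi_0$). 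The paper instead works globally with Gauss polynomials: it observes that the lifted ideal $\gh'=(\eta_0,\dots,\eta_n)$ is regular and finitely generated, hence invertible and locally principal since $\RR$ is Pr\"ufer, so $P(T)=\sum\eta_iT^i$ is a Gauss polynomial; it then verifies by an explicit content computation that the polynomial $F(T)=\sum(x_i+\gi_0)T^i$ is Gauss over $A/\gi_0$, and concludes via the Bazzoni--Glaz criterion (Theorem \ref{gauss-known}(2)) that its regular content is invertible. Your approach is somewhat more elementary in that it avoids the Gauss-polynomial machinery entirely and uses only Griffin's theorem, which is already stated in the paper; the paper's content computation, on the other hand, is what makes the adaptation claimed in Proposition \ref{sabato} immediate (one only has to rerun the same coefficient manipulation modulo $\ga$ instead of $\gi_0$), and it records the stronger intermediate fact that regular finitely generated ideals of $A/\gi_0$ are contents of Gauss polynomials.
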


\begin{proof}
During the proof, we denote $\RR$ and $A/\gi_0$ simply by $R$ and $R_0$ respectively. Let $\gh=(x_0+\gi_0,x_1+\gi_0,\dots, x_n+\gi_0)$ be a regular finitely generated ideal of $R_0$. Set $\eta_i:=(f(x_i),g(x_i)) \in R$ for every $i=0,\dots, n$. Then condition (\ref{*}) implies that $\gh':=(\eta_0,\eta_1,\dots,\eta_n)$ is a regular finitely generated ideal of $R$. In particular the polynomial $P(T):=\sum_{i=0}^n \eta_i T^i \in R[T]$ is a Gauss polynomial. Our aim is to show that also the polynomial $F(T):=\sum_{i=0}^n (x_i+\gi_0) T^i \in R_0[T]$ is a Gauss polynomial. Let $G(T)=\sum_{j=0}^m (y_i+\gi_0)T^j$ be any other polynomial in $R_0[T]$. It suffices to prove that $c_{R_0}(F)c_{R_0}(G)\subseteq c_{R_0}(FG)$. If $a+\gi_0 \in c_{R_0}(F)c_{R_0}(G)$ and $Q(T):=\sum_{j=0}^m (f(y_j),g(y_j))T^j\in R[T]$, then $(f(a)+b,g(a)+c) \in c_R(P)c_R(Q)=c_R(PQ)$ for suitable elements $b\in \gb$ and $c \in \gc$. Looking only at the first coordinate of $R$, we get that there exist $a_0,\dots,a_{n+m} \in A$ and  $b_0,\dots, b_{n+m} \in \gb$ such that
$$
f(a)+b=\sum_{k=0}^{n+m} \left((f(a_k)+b_k)\sum_{i+j=k}f(x_i y_j)\right),
$$
so, in particular,
$$
f(a)-\sum_{k=0}^{n+m}\left( f(a_k)\sum_{i+j=k}f(x_i y_j) \right) \in \gb.
$$
It means that $a-\sum_{k=0}^{n+m}\left( \sum_{i+j=k} x_i y_j\right)a_k \in \gi_0$, which implies that $a+\gi_0 \in c_{R_0}(FG)$.
\end{proof}

\begin{Prop}\label{sabato}
Assume that $\RR$ is a Pr\"ufer ring, let $\ga$ be an ideal of $A$ and let $p:A\longrightarrow A/\f a$ be the canonical projection. If
$f(p^{-1}(\Reg(A/\ga)))\subseteq \Reg(B)$ and $g(p^{-1}(\Reg(A/\ga)))\subseteq \Reg(C)$,
then $A/\ga$ is a Pr\"ufer ring whenever one of the following conditions holds:
\begin{enumerate}
\item
$\gi_0\subseteq \ga$;
\item
$g$ is surjective and $\Ker(g)\subseteq \ga$;
\item
$\f c\subseteq g(A)$ and $\Ker(g)\subseteq \ga$.
\end{enumerate}
\end{Prop}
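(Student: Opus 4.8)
The plan is to reduce each of the three cases to an application of Proposition \ref{gauss}, by exhibiting $A/\ga$ as (or inside) a suitable bi-amalgamation that is forced to be Pr\"ufer. For case (1), the hypothesis $\gi_0\subseteq\ga$ means $\ga/\gi_0$ is an ideal of $A/\gi_0$; by Proposition \ref{gauss} the ring $A/\gi_0$ is Pr\"ufer, and I would then want to show that $A/\ga\cong (A/\gi_0)/(\ga/\gi_0)$ is again Pr\"ufer. The Pr\"ufer property does not pass to arbitrary quotients, so this is not automatic — but here the regularity hypothesis on $p^{-1}(\Reg(A/\ga))$ is exactly what is needed: it lets us realize $A/\ga$ as a bi-amalgamation of $A/\gi_0$ (with trivial data, e.g. $B=C=A/\ga$, $\f b=\f c=0$, and $f,g$ the projections) and invoke Proposition \ref{gauss} again, now relative to the base $A/\gi_0$ rather than $A$. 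So the real content of case (1) is that Proposition \ref{gauss} applies with $A/\gi_0$ in place of $A$ once we know $A/\gi_0$ is Pr\"ufer.

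For cases (2) and (3), the idea is to bypass $A/\gi_0$ and instead compare $\RR$ directly with a bi-amalgamation built over $C$ (or $g(A)$). In case (2), $g$ surjective with $\Ker(g)\subseteq\ga$ means $A/\ga$ is a quotient of $C=g(A)$, namely $A/\ga\cong C/g(\ga)$ where $g(\ga)$ is an ideal of $C$ containing $\f c$ (since $g^{-1}(\f c)=\gi_0$ and one checks $g(\ga)\supseteq \f c$ using surjectivity). One then shows $C$ is Pr\"ufer — e.g.\ because $\RR$ surjects onto, or is faithfully related to, $g(A)+\f c=C$, and Pr\"ufer passes appropriately through that relationship — and then that the regularity condition on $p^{-1}(\Reg(A/\ga))$ transports into a regularity condition allowing the quotient $C\to C/g(\ga)\cong A/\ga$ to preserve the Pr\"ufer property, again via the mechanism of Proposition \ref{gauss}. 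Case (3) is the variant where $g$ need not be surjective but $\f c\subseteq g(A)$; then $g(A)+\f c=g(A)\cong A/\Ker(g)$, so $A/\Ker(g)$ is Pr\"ufer (being one of the pullback factors $g(A)+\f c$, which must be Pr\"ufer when $\RR$ is), and since $\Ker(g)\subseteq\ga$ the ring $A/\ga$ is a further quotient $(A/\Ker(g))/(\ga/\Ker(g))$, handled by the same Proposition \ref{gauss} argument with $A/\Ker(g)$ as base.

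Concretely, the recurring engine is: \emph{if $R$ is Pr\"ufer, $\f d$ is an ideal of $R$, $q:R\to R/\f d$ the projection, and $q(q^{-1}(\Reg(R/\f d)))\subseteq\Reg(\text{something})$ in the way required, then $R/\f d$ is Pr\"ufer} — this is precisely Proposition \ref{gauss} read with $B=C=R/\f d$, $\f b=\f c=0$, $f=g=q$, since then $\RR\cong R/\f d$ and $\gi_0=\f d$ while condition (\ref{*}) becomes the stated regularity hypothesis. So for each case I would (i) identify the right base ring $R$ among $A/\gi_0$, $g(A)$, $A/\Ker(g)$; (ii) check $R$ is Pr\"ufer using the pullback description of $\RR$ and the fact that the factors $f(A)+\f b$, $g(A)+\f c$ inherit the Pr\"ufer property (or directly from Proposition \ref{gauss}); (iii) identify $A/\ga$ as $R/\f d$ for an explicit ideal $\f d$; and (iv) verify that the given condition $f(p^{-1}(\Reg(A/\ga)))\subseteq\Reg(B)$, $g(p^{-1}(\Reg(A/\ga)))\subseteq\Reg(C)$ implies the regularity hypothesis needed to push Proposition \ref{gauss} through at the level of $R$.

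The main obstacle I anticipate is step (ii)/(iv) in cases (2) and (3): showing that the intermediate ring ($C$, or $g(A)$) is genuinely Pr\"ufer, and — more delicately — translating the regularity hypothesis, which is phrased via $f$ and $g$ on preimages in $A$, into a regularity statement purely about the quotient map out of that intermediate ring. One has to be careful that $p^{-1}(\Reg(A/\ga))$ maps onto enough of $\Reg(C/g(\ga))$ (respectively $\Reg(g(A)/g(\ga))$), which uses surjectivity of $g$ (in case (2)) or the inclusion $\f c\subseteq g(A)$ together with $g^{-1}(\f c)=\gi_0$ (in case (3)); pinning down exactly which regular elements are hit, and that non-regular elements of $A$ don't spuriously become regular mod $\ga$ in a way that breaks the containment, is where the bookkeeping is genuinely needed. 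Case (1) should be comparatively routine once the ``engine'' lemma is isolated.
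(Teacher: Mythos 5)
Your overall strategy---factoring through an intermediate ring and quoting Proposition \ref{gauss} twice---does not match the paper's proof and, more importantly, has genuine gaps. The paper's proof is a direct rerun of the Gauss-polynomial argument of Proposition \ref{gauss} with $\ga$ in place of $\gi_0$: one lifts a regular finitely generated ideal $(x_0+\ga,\dots,x_n+\ga)$ of $A/\ga$ to the ideal of $\RR$ generated by the elements $(f(x_i),g(x_i))$, which is regular by the stated hypothesis on $p^{-1}(\Reg(A/\ga))$ and hence invertible, so $\sum_i(f(x_i),g(x_i))T^i$ is a Gauss polynomial; one then pushes the content computation back down to $A/\ga$ by reading off the \emph{first} coordinate in case (1) (where $f^{-1}(\gb)=\gi_0\subseteq\ga$ closes the argument) and the \emph{second} coordinate in cases (2) and (3) (where the elements of $\gc$ occurring in the computation all have $g$-preimages, lying in $g^{-1}(\gc)=\gi_0$, so that the relevant difference of elements of $A$ lands in $\Ker(g)\subseteq\ga$). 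No intermediate Pr\"ufer ring is needed.

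The concrete problems with your reduction are the following. First, your ``engine'' is not an instance of Proposition \ref{gauss}: with $B=C=R/\f d$, $\f b=\f c=0$, $f=g=q$, one has $\RR\cong R/\f d$ \emph{and} $A/\gi_0=R/\f d$, so that reading of the proposition asserts ``if $R/\f d$ is Pr\"ufer then $R/\f d$ is Pr\"ufer''---it never mentions $R$ at all. (The engine statement itself, that $R$ Pr\"ufer together with $q^{-1}(\Reg(R/\f d))\subseteq\Reg(R)$ implies $R/\f d$ Pr\"ufer, is true, but it requires its own proof, which is again the lift-a-Gauss-polynomial argument.) Second, in case (1) you assert that $A/\gi_0$ is Pr\"ufer via Proposition \ref{gauss}; that requires condition (\ref{*}), i.e.\ a hypothesis on $\Reg(A/\gi_0)$, whereas Proposition \ref{sabato} only assumes a condition on $\Reg(A/\ga)$---neither implies the other, so this first step is unavailable, and the regularity condition needed for your second step (preimages in $A/\gi_0$ of $\Reg(A/\ga)$ being regular in $A/\gi_0$) is likewise not among the hypotheses. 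Third, in cases (2) and (3) you rely on $C$ (resp.\ $g(A)+\gc$) being Pr\"ufer because it is a ``pullback factor'' or homomorphic image of $\RR$; the Pr\"ufer property does not pass to homomorphic images, the paper proves no such transfer (the analogue exists only for Noetherianity), and the hypothesis $g(p^{-1}(\Reg(A/\ga)))\subseteq\Reg(C)$ does not allow you to lift an arbitrary regular ideal of $C$ to a regular ideal of $\RR$. The correct role of the hypotheses in (2) and (3) is only to guarantee that elements of $\gc$ have $g$-preimages, so that the second-coordinate content computation produces an element of $\Ker(g)\subseteq\ga$.
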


\begin{proof}
It suffices to notice that the proof of Theorem \ref{gauss} can be easily adapted in all three cases of the statement.
\end{proof}
\begin{corollary}\cite[Proposition 4.2]{FCA1}
Let $f:A\longrightarrow B$ be a ring homomorphism such that $f(\Reg(A))\subseteq \Reg(B)$ and let $\f b$ be an ideal of $B$ such that $A\bowtie^f\f b$ is a Pr\"ufer ring. Then $A$ is a Pr\"ufer ring. 
\end{corollary}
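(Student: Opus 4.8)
The plan is to realize the amalgamation $A\bowtie^f\f b$ as a bi-amalgamation and then quote Proposition~\ref{sabato}. As recalled in Section~2, setting $\gi_0:=f^{-1}(\f b)$ one has
$$
A\bowtie^f\f b \;=\; A\bowtie^{f,\id_A}(\f b,\gi_0),
$$
so $A\bowtie^f\f b$ is (isomorphic to) a bi-amalgamation whose two structure homomorphisms are $f\colon A\to B$ and $\id_A\colon A\to A$, whose two ideals are $\f b$ and $\gi_0$, and for which the associated common preimage is $f^{-1}(\f b)=\id_A^{-1}(\gi_0)=\gi_0$.

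First I would apply Proposition~\ref{sabato} to this bi-amalgamation, which is Pr\"ufer by hypothesis, taking for $\ga$ the zero ideal of $A$; then the canonical projection $p\colon A\to A/\ga$ is the identity. In the notation of Proposition~\ref{sabato} (with the first homomorphism $f$ mapping to $B$ and the second homomorphism $\id_A$ mapping to $A$), the two regularity hypotheses read
$$
f\bigl(p^{-1}(\Reg(A))\bigr)=f(\Reg(A))\subseteq\Reg(B)
\qquad\text{and}\qquad
\id_A\bigl(p^{-1}(\Reg(A))\bigr)=\Reg(A)\subseteq\Reg(A);
$$
the first is exactly the standing assumption $f(\Reg(A))\subseteq\Reg(B)$, and the second holds trivially.

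It then remains to check one of the three alternatives of Proposition~\ref{sabato}, and condition~(2) is tailor-made here: the second structure homomorphism of $A\bowtie^{f,\id_A}(\f b,\gi_0)$ is $\id_A$, which is surjective, and $\Ker(\id_A)=(0)=\ga$. Hence Proposition~\ref{sabato}(2) applies and yields that $A/\ga=A$ is a Pr\"ufer ring, as desired. (Note that Proposition~\ref{gauss} applied to the same bi-amalgamation would only give that $A/\gi_0$ is Pr\"ufer, which is weaker, so the sharper Proposition~\ref{sabato} is genuinely needed.) The only delicate point is the translation of notation — matching the generic data $(f,g,B,C,\f b,\f c,\gi_0)$ of Proposition~\ref{sabato} with $(f,\id_A,B,A,\f b,\gi_0,\gi_0)$ in the present situation — after which the argument is immediate, the key observation being that the choice $\ga=(0)$ forces condition~(2) to hold automatically (because the second homomorphism is onto with trivial kernel) while collapsing the regularity hypothesis to precisely the one we are given.
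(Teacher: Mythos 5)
Your proposal is correct and coincides with the paper's own argument: the paper likewise writes $A\bowtie^f\f b$ as the bi-amalgamation $A\bowtie^{f,\operatorname{Id}_A}(\f b,\f i_0)$ and applies case (2) of Proposition \ref{sabato} with $\f a=0$. Your additional verification of the two regularity hypotheses is a welcome (and accurate) elaboration of a step the paper leaves implicit.
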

\begin{proof}
	Apply case (2) of Proposition \ref{sabato} to $g:={\rm Id}_A:A\longrightarrow A$, $\f a:=0$ and $\f c:=\f i_0$. Then such a bi-amalgamation is $A\bowtie^f\f b$, as we have seen at the beginning of Section 2.
\end{proof}
\begin{example}\label{example1}
Let $\mathbf{k}$ be a field and set $A=\mathbf{k}[X,Y]$, $\gi_0=(Y)$, $B=C=A/\gi_0$, $\gb = \gc =0$ and $f=g=\pi: A \longrightarrow A/\gi_0$. It is easy to see that condition (\ref{*}) holds, $\RR \cong A/\gi_0$ is a Pr\"ufer ring, but $A$ is not a Pr\"ufer ring.
\end{example}
We now recall the following known fact regarding the local structure of a bi-amalgamated algebra.
\begin{Prop}\cite[Proposition 5.7]{KLT}\label{localization}
Let $\gp$ be a prime ideal of $A$ containing $\gi_0$. Consider the multiplicative subsets $S_\gp:=f(A\setminus \gp)+\gb$ of $B$ and $T_\gp:=g(A\setminus \gp)+\gc$ of $C$. Let $f_\gp:A_\gp\longrightarrow B_{S_\gp}$ and $g_\gp: A_\gp\longrightarrow C_{T_\gp}$ be the ring homomorphisms induced by $f$ and $g$. Then
$$
f^{-1}(\gb B_{S_\gp})=g^{-1}(\gc C_{T_\gp})=\gi_0 A_\gp
$$
and
$$
\RR_{(\gp \bowtie^{f,g}(\gb,\gc))}\cong A_\gp\bowtie^{f_\gp,g_\gp}(\gb B_{S_\gp},\gc C_{T_\gp}).
$$
\end{Prop}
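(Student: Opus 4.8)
The plan is to establish the two assertions in turn, the second resting on the first. Set $R:=\RR$, let $\mathfrak{P}:=\gp\bowtie^{f,g}(\gb,\gc)$, which is a prime ideal of $R$ by Proposition~\ref{prime-spectrum}(1), and let $R':=A_\gp\bowtie^{f_\gp,g_\gp}(\gb B_{S_\gp},\gc C_{T_\gp})$; the usual computation shows $R'$ is a subring of $B_{S_\gp}\times C_{T_\gp}$, and statement~(1) is exactly what guarantees that it is again a bi-amalgamation. For (1) it suffices, by symmetry, to prove $f_\gp^{-1}(\gb B_{S_\gp})=\gi_0 A_\gp$. Since $\gb B_{S_\gp}=S_\gp^{-1}\gb$ and $\gi_0 A_\gp=(A\setminus\gp)^{-1}\gi_0$, the inclusion ``$\supseteq$'' is immediate from $f(\gi_0)\subseteq\gb$ and $f(A\setminus\gp)\subseteq S_\gp$. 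For ``$\subseteq$'', I take $\xi=x/s\in A_\gp$ with $f(x)/f(s)\in S_\gp^{-1}\gb$, unwind the defining relation of the localization, and write each element of $S_\gp$ occurring in it as $f(\,\cdot\,)$ plus an element of $\gb$; reducing the resulting identity in $B$ modulo $\gb$ then yields $rx\in f^{-1}(\gb)=\gi_0$ for a suitable $r\in A\setminus\gp$, so $\xi\in\gi_0 A_\gp$. The key (and I expect this to be one of the two genuinely delicate points) is the elementary remark that every element of $S_\gp$ is congruent mod $\gb$ to $f$ of some element of $A\setminus\gp$, which is what lets one transport congruences across $f$.

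For (2), note first that since $\gi_0\subseteq\gp$, any two representatives of an element $\eta=(f(a)+b,g(a)+c)$ of $R$ have ``$A$-parts'' congruent mod $\gi_0$, so the condition $a\in\gp$ is well defined on $R$; one checks at once that $\eta\in\mathfrak{P}$ iff $a\in\gp$, whence $R\setminus\mathfrak{P}=\{(f(a)+b,g(a)+c):a\in A\setminus\gp,\ b\in\gb,\ c\in\gc\}$. Restricting the canonical map $B\times C\to B_{S_\gp}\times C_{T_\gp}$ gives a ring homomorphism $\phi:R\to R'$ (the image is in $R'$ because $b/1\in\gb B_{S_\gp}$ and $c/1\in\gc C_{T_\gp}$). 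If $\eta\in R\setminus\mathfrak{P}$ then $f(a)+b\in S_\gp$ and $g(a)+c\in T_\gp$, so $\phi(\eta)$ is a unit of $B_{S_\gp}\times C_{T_\gp}$; and its inverse still lies in $R'$, since $\frac{1}{f(a)+b}=f_\gp(\tfrac1a)-\frac{b}{f(a)(f(a)+b)}$ with the second summand in $\gb B_{S_\gp}$, and symmetrically in the other coordinate. Hence $\phi(R\setminus\mathfrak{P})$ consists of units of $R'$, and $\phi$ induces a ring homomorphism $\bar\phi\colon R_{\mathfrak{P}}\to R'$.

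It remains to show $\bar\phi$ is bijective. For surjectivity I would use that $R'$ is, additively, the sum of the subring $\{(f_\gp(\xi),g_\gp(\xi)):\xi\in A_\gp\}$, of $\gb B_{S_\gp}\times\{0\}$, and of $\{0\}\times\gc C_{T_\gp}$, and that each of these lies in the image of $\bar\phi$: the subring is generated by the $(f(a)/1,g(a)/1)=\phi((f(a),g(a)))$ and the $(1/f(s),1/g(s))=\bar\phi((f(s),g(s))^{-1})$ with $a\in A$, $s\in A\setminus\gp$, while $(b/\sigma,0)=\bar\phi\big((b,0)/(f(s_1)+b_1,g(s_1))\big)$ for $b\in\gb$ and $\sigma=f(s_1)+b_1\in S_\gp$ (and symmetrically for the last summand). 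For injectivity, suppose $\bar\phi(\eta/u)=0$ with $\eta=(f(a)+b,g(a)+c)$; then $\phi(\eta)=0$, so $\sigma(f(a)+b)=0$ in $B$ and $\tau(g(a)+c)=0$ in $C$ for some $\sigma=f(s_1)+b_1\in S_\gp$, $\tau=g(s_2)+c_1\in T_\gp$. Putting $v:=(f(s_1)+b_1,g(s_1))\cdot(f(s_2),g(s_2)+c_1)\in R$, whose ``$A$-part'' is $s_1 s_2\notin\gp$, we get $v\in R\setminus\mathfrak{P}$ and $v\eta=0$ (its coordinates are $f(s_2)\sigma(f(a)+b)$ and $g(s_1)\tau(g(a)+c)$, both zero), so $\eta/u=0$ in $R_{\mathfrak{P}}$. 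Here is the second genuinely delicate point: the two ``torsion witnesses'' $\sigma$ and $\tau$ come a priori from different $s_1,s_2\in A\setminus\gp$, and the \emph{asymmetric} product defining $v$ — keeping $b_1$ only in the first factor and $c_1$ only in the second — is exactly what is needed to kill $\eta$ in both coordinates while staying inside $R$ and outside $\mathfrak{P}$. Everything else reduces to bookkeeping with the explicit descriptions of $S_\gp$, $T_\gp$, $\gb B_{S_\gp}$, $\gc C_{T_\gp}$ and $R\setminus\mathfrak{P}$.
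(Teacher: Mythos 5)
The paper offers no proof of this statement: it is quoted verbatim from \cite[Proposition 5.7]{KLT}, so there is nothing in-house to compare against. Your argument is correct and self-contained. The computation $f_\gp^{-1}(\gb B_{S_\gp})=\gi_0 A_\gp$ via reduction mod $\gb$ of the localization relation is sound (every element of $S_\gp$ is by definition $f(t)+b$ with $t\notin\gp$, so the product of the two witnesses from $S_\gp$ contributes $f(t't)$ modulo $\gb$), and the isomorphism is established the standard way: the canonical map $R\to R'$ sends $R\setminus\mathfrak P$ into the units of $R'$ (your explicit formula $\frac{1}{f(a)+b}=f_\gp(\frac1a)-\frac{b}{f(a)(f(a)+b)}$ correctly shows the inverse stays in $R'$ with the \emph{same} $A_\gp$-part $1/a$ in both coordinates), surjectivity follows from the additive decomposition of $R'$, and your asymmetric unit $v=(f(s_1)+b_1,g(s_1))\,(f(s_2),g(s_2)+c_1)$ does kill $\eta$ in both coordinates while lying in $R\setminus\mathfrak P$. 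The two points you flag as delicate are exactly the right ones, and both are handled correctly; I see no gap.
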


\begin{Prop}
Assume that condition (\ref{**}) holds and preserve the notation of Proposition \ref{localization}. If $\RR$ is a Pr\"ufer ring, then $\gb B_{S_\gm}=f_\gm(\frac{r}{1})\gb B_{S_\gm}$ and $\gc C_{T_\gm}=g_\gm(\frac{r}{1})\gc C_{T_\gm}$ for every $\gm \in \Max(A)\cap V(\gi_0)$ and every regular element $r \in A$.
\end{Prop}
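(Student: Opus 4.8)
The plan is to reduce everything to the local case via Proposition~\ref{localization} and then invoke the regular total order property. Fix $\gm\in\Max(A)\cap V(\gi_0)$ and $r\in\Reg(A)$, and set $\mathfrak M:=\gm\bowtie^{f,g}(\gb,\gc)$. Since $\gm$ is a prime of $A$ containing $\gi_0$, the ideal $\mathfrak M$ is a (maximal) prime of $\RR$ by Proposition~\ref{prime-spectrum}, and Proposition~\ref{localization} yields
$$R':=\RR_{\mathfrak M}\ \cong\ A_\gm\bowtie^{f_\gm,g_\gm}(\gb',\gc'),\qquad \gb':=\gb B_{S_\gm},\ \gc':=\gc C_{T_\gm},$$
together with $f_\gm^{-1}(\gb')=g_\gm^{-1}(\gc')=\gi_0 A_\gm$. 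As a localization of the Pr\"ufer ring $\RR$, the ring $R'$ is local and Pr\"ufer, so by \cite[Theorem~13]{Gr} any two ideals of $R'$, one of which is regular, are comparable. It is then enough to show $\gb'=f_\gm(\tfrac r1)\gb'$ and $\gc'=g_\gm(\tfrac r1)\gc'$, the inclusions $\supseteq$ being obvious.

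Before the main step I would record a few elementary facts. If $0\in S_\gm$, say $f(s)+b=0$ with $s\in A\setminus\gm$ and $b\in\gb$, then $s\in f^{-1}(\gb)=\gi_0\subseteq\gm$, a contradiction; hence $B_{S_\gm}\neq 0$, and likewise $C_{T_\gm}\neq 0$. Next, condition~(\ref{**}) gives $f(r)\in\Reg(B)$ and $g(r)\in\Reg(C)$, and the standard fact that a regular element stays regular under localization yields $f_\gm(\tfrac r1)=\tfrac{f(r)}1\in\Reg(B_{S_\gm})$ and $g_\gm(\tfrac r1)=\tfrac{g(r)}1\in\Reg(C_{T_\gm})$. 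Consequently $\rho:=(f_\gm(\tfrac r1),g_\gm(\tfrac r1))\in R'$ has both components regular, so it is a regular element of $B_{S_\gm}\times C_{T_\gm}$ and hence of its subring $R'$; in particular $\rho R'$ is a regular ideal.

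For the core argument I would fix $b\in\gb'$ and consider $\beta:=(b,0)\in\gb'\times\gc'\subseteq R'$. By the total order property, $\rho R'$ and $\beta R'$ are comparable. If $\rho R'\subseteq\beta R'$, then $\rho=\beta\sigma$ for some $\sigma\in R'$, and comparing the second coordinates gives $g_\gm(\tfrac r1)=0$ in the nonzero ring $C_{T_\gm}$, which is absurd since $g_\gm(\tfrac r1)$ is regular. Hence $\beta R'\subseteq\rho R'$, so $(b,0)=\rho\cdot(f_\gm(a')+b_0,\,g_\gm(a')+c_0)$ for some $a'\in A_\gm$, $b_0\in\gb'$, $c_0\in\gc'$. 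The second coordinate gives $g_\gm(\tfrac r1)(g_\gm(a')+c_0)=0$, whence $g_\gm(a')=-c_0\in\gc'$ by regularity, i.e. $a'\in g_\gm^{-1}(\gc')=\gi_0 A_\gm=f_\gm^{-1}(\gb')$, so that $f_\gm(a')+b_0\in\gb'$. The first coordinate then gives $b=f_\gm(\tfrac r1)\bigl(f_\gm(a')+b_0\bigr)\in f_\gm(\tfrac r1)\gb'$. Thus $\gb'\subseteq f_\gm(\tfrac r1)\gb'$, and the equality for $\gc'$ follows symmetrically, comparing $\rho R'$ with $(0,c)R'$ for $c\in\gc'$ and interchanging the roles of $B$ and $C$.

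The calculation is essentially routine; the points that need attention are the passage to $R'$ (making sure Proposition~\ref{localization} applies and that the Pr\"ufer property descends to localizations), the choice of the \emph{principal} ideal $(b,0)R'$ to compare with $\rho R'$ --- which makes the unwanted inclusion immediately contradictory --- and the remark that $C_{T_\gm}\neq 0$, equivalently that $\gi_0\subseteq\gm$, is precisely what closes off the bad case. I do not expect a substantive obstacle beyond this bookkeeping.
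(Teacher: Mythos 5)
Your proposal follows the paper's proof almost step for step in its overall strategy: localize at $\f M:=\gm\bowtie^{f,g}(\gb,\gc)$ via Proposition \ref{localization}, compare the principal ideals generated by $(f_\gm(\frac{r}{1}),g_\gm(\frac{r}{1}))$ and by $(\tau,0)$, rule out the unwanted inclusion using that $g_\gm(\frac{r}{1})$ is regular (hence nonzero), and read the desired factorization off the second coordinate. All of that matches the paper and is carried out correctly. The one step that does not hold up is your justification of the comparability: you assert that $R'=\RR_{\f M}$, being a localization of a Pr\"ufer ring, is itself Pr\"ufer, and then apply \cite[Theorem 13]{Gr} to $R'$. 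For rings with zero-divisors this deduction is not legitimate: a finitely generated regular ideal of $R'$ need not be the extension of an ideal that is regular in $\RR$ (an element can become regular after localization), and Griffin's Pr\"ufer property is not in general inherited by localizations. Indeed, this is precisely why \cite[Theorem 13]{Gr} is phrased in terms of the regular total order property of the \emph{pair} $(\RR,\f M)$ --- a statement about ideals of $\RR$ and their images in $\RR_{\f M}$ --- rather than as a property of the local rings $\RR_{\f M}$ themselves.

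The gap is easily closed, and the paper closes it in the natural way: write $\tau=b_0/s$ with $b_0\in\gb$, $s\in S_\gm$, and apply the regular total order property of $(\RR,\f M)$ to the two ideals $(f(r),g(r))\RR$ and $(b_0,0)\RR$ of $\RR$. The first is regular in $\RR$ exactly because of condition (\ref{**}), and their localizations at $\f M$ are the two principal ideals you want to compare (note that $(b_0/1,0)$ and $(\tau,0)$ generate the same ideal of $R'$, since $s$ lifts to a unit of the local ring $R'$). With this substitution for the comparability step, your argument coincides with the paper's proof.
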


\begin{proof}
Fix a maximal ideal $\gm$ of $A$ containing $\gi_0$ and a regular element $r$ of $A$. Set $\sigma=r/1\in A_\gm$. Condition (\ref{**}) implies that $(f_\gm(\sigma),g_\gm(\sigma))$ is a regular element of the local ring $\hat{R}:=\RR_{(\gm\bowtie^{f,g}(\gb,\gc))}$. Fix an element $\tau \in \gb B_{S_\gm}$. By hypothesis, $(\RR,\gm\bowtie^{f,g}(\gb,\gc))$ has the regular total order property, so that, in particular, the principal ideals of $\hat{R}$ generated respectively by $(f_\gm(\sigma),g_\gm(\sigma))$ and $(\tau,0)$ are comparable. Using the fact that $g_\gm(\sigma)\neq 0$, it is immediate to check that the inclusion $(\tau,0)\hat{R}\subseteq  (f_\gm(\sigma),g_\gm(\sigma))\hat{R}$ holds. Thus, there exist elements $\alpha \in A_\gm, \beta \in \gb B_{S_\gm}$ and $\gamma \in \gc C_{T_\gm}$ such that 
$$
(\tau,0)=(f_\gm(\sigma),g_\gm(\sigma))(f_\gm(\alpha)+\beta,g_\gm(\alpha)+\gamma).
$$
From $g_\gm(\sigma)(g_\gm(\alpha)+\gamma)=0$ we get $g_\gm(\alpha)+\gamma=0$, and so $\alpha \in \gi_0 A_\gm$. In particular, $\tau=f_\gm(\sigma)(f_\gm(\alpha)+\beta)\in f_\gm(\sigma)\gb B_{S_\gm}$. It follows that $\gb B_{S_\gm}=f_\gm(\frac{r}{1})\gb B_{S_\gm}$. The equality $\gc C_{T_\gm}=g_\gm(\frac{r}{1})\gc C_{T_\gm}$ can be proved similarly.
\end{proof}

\begin{remark}\label{remark1}
There is an analogous of the previous proposition in the situation in which condition (\ref{*}) holds. In this case, if $\RR$ is a Pr\"ufer ring, then $\gb B_{S_\gm}=f_\gm(\frac{r}{1})\gb B_{S_\gm}$ and $\gc C_{T_\gm}=g_\gm(\frac{r}{1})\gc C_{T_\gm}$ for every $\gm \in \Max(A)\cap V(\gi_0)$ and every element $r \in A$ such that $r+\gi_0$ is a regular element in $A/\gi_0$. The proof is exactly the same, since also condition (\ref{*}) implies that the element $(f_\gm(\sigma),g_\gm(\sigma))$ is regular in $\RR_{(\gm\bowtie^{f,g}(\gb,\gc))}$.
\end{remark}

\begin{Prop}\label{prufer-regular}
Assume that $\gb$ and $\gc$ are regular ideals. Then the following conditions are equivalent:
\begin{enumerate}
\item
$\RR$ is a Pr\"ufer ring;
\item
$B, C$ are Pr\"ufer rings and $\gb=B$.
\end{enumerate}
\end{Prop}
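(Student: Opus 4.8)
The plan is to show that, under the regularity hypothesis, condition (1) forces $\gb=B$ — equivalently $\gi_0=A$, equivalently $\gc=C$ — after which $\RR$ collapses to the direct product $B\times C$, and the equivalence reduces to the standard (and easy) fact that a finite product of rings is Pr\"ufer if and only if each factor is; this is checked by writing an ideal of $B\times C$ as a product of ideals of $B$ and $C$ and using that $Q(B\times C)=Q(B)\times Q(C)$, where $Q(-)$ denotes the total ring of fractions. For the implication (2)$\Rightarrow$(1): if $\gb=B$ then $\gi_0=f^{-1}(\gb)=A$, hence $g^{-1}(\gc)=A$, so $1=g(1)\in\gc$ and $\gc=C$; therefore $\RR=\{(f(a)+b,g(a)+c)\mid a\in A,\ b\in B,\ c\in C\}=B\times C$ (take $a=0$), which is Pr\"ufer because $B$ and $C$ are.

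For the substantial implication (1)$\Rightarrow$(2), I would fix regular elements $b_0\in\gb$ and $c_0\in\gc$ and exploit three elementary facts: $(b_0,0),(0,c_0)\in\RR$; $(b_0,0)(0,c_0)=0$; and $(b_0,c_0)=(b_0,0)+(0,c_0)$ is a regular element of $\RR$ (if $(x,y)\in\RR$ kills it then $xb_0=yc_0=0$, so $x=y=0$). Hence $\gh:=(b_0,0)\RR+(0,c_0)\RR$ is a finitely generated regular ideal of $\RR$, so by (1) it is invertible, say $\gh\,\gh^{-1}=\RR$ with $\gh^{-1}=(\RR:\gh)$. Expanding $1_\RR\in\gh\,\gh^{-1}$ over the two generators yields $\zeta_1,\zeta_2\in(\RR:\gh)$ with $(1,1)=(b_0,0)\zeta_1+(0,c_0)\zeta_2$. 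Since $(b_0,c_0)\in\gh$ is regular, each $\xi\in(\RR:\gh)$ satisfies $\xi(b_0,c_0)\in\RR$, so $(\RR:\gh)\subseteq\RR[(b_0,c_0)^{-1}]$, and $\RR[(b_0,c_0)^{-1}]$ embeds canonically and injectively into $Q(B)\times Q(C)$ because $(b_0,c_0)$ is a unit there. Reading the first coordinate of the displayed identity in $Q(B)\times Q(C)$, with $\zeta_1=(u_1,v_1)$, gives $b_0u_1=1$, hence $\zeta_1(b_0,0)=(u_1b_0,0)=(1,0)$; but $\zeta_1(b_0,0)\in\gh\,\gh^{-1}=\RR$, so $(1,0)\in\RR$. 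Writing $(1,0)=(f(a)+b,g(a)+c)$ with $a\in A$, $b\in\gb$, $c\in\gc$, the relation $g(a)+c=0$ forces $a\in g^{-1}(\gc)=f^{-1}(\gb)$, whence $f(a)\in\gb$ and $1\in\gb$, i.e. $\gb=B$. As in the easy direction this gives $\gc=C$ and $\RR=B\times C$, so (2) holds.

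The one genuinely delicate point is the passage from invertibility of $\gh$ to the membership $(1,0)\in\RR$: a priori the elements of $\gh^{-1}$ live in the abstract total quotient ring of $\RR$, and a regular element of $\RR$ need not remain regular in $B\times C$, so coordinate-wise reasoning in $Q(B)\times Q(C)$ is not automatically available. The resolution above is that the \emph{particular} regular element $(b_0,c_0)$ already belongs to $\gh$, which confines $\gh^{-1}$ to $\RR[(b_0,c_0)^{-1}]$ — a ring that does map injectively into $Q(B)\times Q(C)$. An alternative, fully local, argument avoids total quotient rings: $\gh$ invertible is locally principal, at each maximal ideal $\gm$ of $\RR$ the ideal $\gh\RR_\gm$ is generated by one of $(b_0,0),(0,c_0)$, and that generator, being invertible, is regular in $\RR_\gm$; since $(b_0,0)(0,c_0)=0$, the other generator is annihilated by an element of $\RR\setminus\gm$ lying in $\Ann_\RR((b_0,0))=\{0\}\times\gc$ or in $\Ann_\RR((0,c_0))=\gb\times\{0\}$, so $\gb\times\gc\not\subseteq\gm$; as $\gm$ is arbitrary this gives $\gb\times\gc=\RR$, hence $\gb=B$ and $\gc=C$.
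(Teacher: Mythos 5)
Your proof is correct, and it reaches the key conclusion ($\gb=B$) by a genuinely different technical route than the paper, even though both arguments ultimately exploit the same obstruction: the two elements $(b_0,0),(0,c_0)\in\RR$ with zero product whose sum $(b_0,c_0)$ is regular. The paper localizes $\RR$ at a maximal ideal $\gp\bowtie^{f,g}(\gb,\gc)$ using its Proposition 4.8 and then invokes Griffin's characterization of Pr\"ufer rings via the regular total order property, comparing the principal ideals generated by $(\beta,\gamma)$ and $(\beta,0)$ in the localized bi-amalgamation; the contradiction $\gb B_{S_\gp}=B_{S_\gp}$ comes out of the resulting factorization. You instead stay global, form the finitely generated regular ideal $\gh=\bigl((b_0,0),(0,c_0)\bigr)$, and unwind the definition of invertibility to extract $(1,0)\in\RR$, which immediately forces $1\in\gb$. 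The genuinely delicate point you flag --- that $(\RR:\gh)$ lives a priori in $Q(\RR)$, which need not map to $Q(B)\times Q(C)$ --- is real, and your resolution (observing that $(\RR:\gh)\subseteq\RR[(b_0,c_0)^{-1}]$ because the regular element $(b_0,c_0)$ lies in $\gh$, and that this localization does embed in $Q(B)\times Q(C)$) is sound; your alternative local argument via $\Ann_\RR((b_0,0))=\{0\}\times\gc$ is also correct and sidesteps quotient rings entirely. What the paper's route buys is uniformity with the rest of Section 4, where the regular total order property and the localization formula are used repeatedly; what yours buys is self-containedness (no appeal to Proposition 4.8 or Griffin's theorem beyond the definition of invertibility) and the cleaner intermediate statement $(1,0)\in\RR\iff\gb=B$. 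The easy direction and the final deduction that $B$ and $C$ are Pr\"ufer from $\RR\cong B\times C$ agree with the paper.
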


\begin{proof}
First notice that $\gb=B$ if and only if $\gc=C$. In particular, if $B, C$ are Pr\"ufer rings and $\gb=B$, then $\RR\cong B\times C$ is a Pr\"ufer ring and the implication $(2)\Rightarrow (1)$ holds.

Now assume that $\RR$ is a Pr\"ufer ring and suppose that $\gb\neq B$. Let  $\gp$ be a maximal ideal of $A$ containing $\gi_0$ (notice that $\gi_0\neq A$, since $\gb\neq B$). Using the same notations of Proposition \ref{localization}, it is straightforward to show that $\gb B_{S_\gp} \neq B_{S_\gp}$. Moreover, if $b \in \gb$ and $c \in \gc$ are regular elements, then so are $\beta:=b/1 \in \gb B_{S_\gp}$ and $\gamma:=c/1 \in \gc C_{T_\gp}$. In particular, $(\beta,\gamma)$ is a regular element of $\hat{R}:=\RR_{(\gp \bowtie^{f,g}(\gb,\gc))}$. Since $(\RR,\gp \bowtie^{f,g}(\gb,\gc))$ has the regular total order property, it follows that the principal ideals $(\beta,\gamma)\hat{R}$ and $(\beta,0)\hat{R}$ are comparable. It is easy to see that $(\beta,0)\hat{R}\subseteq (\beta,\gamma)\hat{R}$ must be the case. It follows that there exist elements $\alpha\in A_\gp,\beta' \in \gb B_{S_\gp}$ and $\gamma' \in \gc C_{T_\gp}$ such that
$$
(\beta,0)=(f_\gp(\alpha)+\beta',g_\gp(\alpha)+\gamma')(\beta,\gamma).
$$
Looking at the equality $(g_\gp(\alpha)+\gamma')\gamma=0$ and using the fact that $\gamma$ is a regular element, we get that $\alpha \in \gi_0 A_\gp$, and so $f_\gp(\alpha)+\beta'\in \gb B_{S_\gp}$. Moreover, since also $\beta$ is regular, $f_\gp(\alpha)+\beta'=1$, that is $\gb B_{S_\gp} = B_{S_\gp}$, a contradiction. It follows that $\gb=B$. Since $\RR\cong B\times C$ is a Pr\"ufer ring, then so are $B$ and $C$.
\end{proof}
\begin{corollary}\cite[Theorem 3.1]{FCA1}
Let $f:A\longrightarrow B$ be a ring homomorphism and let $\f b$ be a regular ideal of $B$ such that $f^{-1}(\f b)$ is a regular ideal of $A$.  Then $A\bowtie^f \f b $ is a Pr\"ufer ring if and only if $A,B$ are Pr\"ufer rings and $\f b=B$.
\end{corollary}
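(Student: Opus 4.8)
The plan is to obtain the statement as an immediate specialization of Proposition \ref{prufer-regular}. Recall from Section 2 that, writing $\f i_0:=f^{-1}(\f b)$, one has the identification $A\bowtie^f\f b=A\bowtie^{f,\id_A}(\f b,\f i_0)$; that is, the classical amalgamation is the bi-amalgamation of $A$ with $(B,A)$ along $(\f b,\f i_0)$ with respect to $(f,\id_A)$. So in the notation of Proposition \ref{prufer-regular} we take the second factor to be $C:=A$, the second homomorphism to be $g:=\id_A$, and the second distinguished ideal to be $\f c:=\f i_0$.

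Next I would verify that the hypothesis of Proposition \ref{prufer-regular} is satisfied for this bi-amalgamation, namely that both distinguished ideals are regular. The ideal $\f b$ is regular in $B$ by assumption, and the ideal $\f c=\f i_0=f^{-1}(\f b)$ is regular in $A$, again by assumption. Hence Proposition \ref{prufer-regular} applies and yields that $A\bowtie^f\f b=A\bowtie^{f,\id_A}(\f b,\f i_0)$ is a Pr\"ufer ring if and only if $B$ and $C=A$ are Pr\"ufer rings and $\f b=B$, which is exactly the asserted equivalence.

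There is no genuine obstacle here: the result is a direct corollary, and the only point that deserves attention is the (immediate) check that the two ideals occurring in the chosen bi-amalgamation presentation are regular. For completeness one could instead use the symmetric presentation $A\bowtie^f\f b\cong A\bowtie^{\id_A,f}(\f i_0,\f b)$, with the roles of the two factors interchanged; reading condition (2) of Proposition \ref{prufer-regular} in that presentation gives that $A$ and $B$ are Pr\"ufer and $\f i_0=A$, and one concludes using the observation recorded at the start of the proof of Proposition \ref{prufer-regular}, that $\f i_0=A$ holds if and only if $\f b=B$.
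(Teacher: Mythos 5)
Your proposal is correct and is essentially the paper's own proof: both reduce the statement to Proposition~\ref{prufer-regular} via the presentation $A\bowtie^f\f b=A\bowtie^{f,\mathrm{Id}_A}(\f b,\f i_0)$ and the observation that the two distinguished ideals are regular by hypothesis. Your extra remark on the symmetric presentation and the equivalence $\f i_0=A\Leftrightarrow\f b=B$ is accurate but not needed.
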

\begin{proof}
 Apply Proposition \ref{prufer-regular}, keeping in mind the representation of $A\bowtie^f \f b $ as a bi-amalgamation (see \cite[Example 2.1]{KLT}). 
\end{proof}
\begin{Prop}
Let $A/\f k$ be a total ring of fractions, where as usual $\f k:=\Ker(f)\cap \Ker(g)\subseteq \gi_0$, and assume that $\gb \subseteq \Jac(B)$ and $\gc \subseteq \Jac(C)$. If both $\gb$ and $\gc$ are torsion $A/\f k $-modules (with the $A/\f k$-module structure inherited by $f$ and $g$ respectively), then $\RR$ is a total ring of fractions.
\end{Prop}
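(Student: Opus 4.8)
The plan is to show that every element of $\RR$ that is a regular element is a unit; equivalently, that every element which is \emph{not} a unit is a zero-divisor. Since $\gb \subseteq \Jac(B)$ and $\gc \subseteq \Jac(C)$, an element $(f(a)+b, g(a)+c) \in \RR$ is a unit of $\RR$ precisely when $f(a)+b$ is a unit of $B$ and $g(a)+c$ is a unit of $C$, which (because $\gb, \gc$ lie in the respective Jacobson radicals) happens exactly when $f(a)$ is a unit of $B/\gb$ and $g(a)$ is a unit of $C/\gc$; in turn, via the embedding $i_{f,g} : A/\gi_0 \hookrightarrow B/\gb \times C/\gc$, this amounts to $a + \f k$ being a unit of $A/\f k$ together with local conditions on $B, C$. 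So the first step is to set up this unit-criterion carefully, using Proposition \ref{prime-spectrum} and the fiber-product presentation.

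Next I would fix an arbitrary element $x = (f(a)+b, g(a)+c) \in \RR$ which is a regular element, and aim to produce an inverse. The key leverage is that $A/\f k$ is a total ring of fractions: if $a + \f k$ is regular in $A/\f k$ then it is already a unit there, and one pushes this forward through $f, g$; the torsion hypothesis on $\gb, \gc$ as $A/\f k$-modules is what lets me absorb the ``error terms'' $b, c$. Concretely, I expect the argument to run: first reduce modulo $\gb \times \gc$ to get that $a + \gi_0$ must be regular in $A/\gi_0$ (hence, lifting, $a + \f k$ is regular in $A/\f k$, hence a unit); then use that $\gb$ is torsion over $A/\f k$ to find, for each generator-type obstruction, an annihilator, and combine these (using that $\gb \subseteq \Jac(B)$, so $1 + \gb \subseteq B^\times$) to conclude $f(a) + b$ is a unit of $B$; symmetrically for $g(a)+c$ in $C$. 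Then $x$ is a unit of $\RR$ by the criterion from the first step.

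The delicate point — and the step I expect to be the main obstacle — is passing from ``$\gb$ is a torsion $A/\f k$-module'' to ``$f(a)+b$ is regular (hence a unit) in $B$'' for a \emph{single} element $b$, rather than for the whole module at once: torsion only gives, for each $b \in \gb$, \emph{some} regular $s + \f k \in A/\f k$ with $s b = 0$ (reading $sb$ via the module structure, i.e. $f(s) b = 0$), and one must check this $s$ can be chosen compatibly, or argue element-by-element. I would handle this by first showing $x$ regular forces $b$ (and $c$) to behave well: if $(y_1, y_2) \in \RR$ annihilates $x$, then looking coordinatewise and using regularity of $a + \f k$ in $A/\f k$ one forces $y_1 \in \gb$, $y_2 \in \gc$, and then the torsion hypothesis applied to $y_1 \in \gb$ via the unit $f(a)$ (already a unit of $B/\gb$, lifted appropriately) gives a contradiction unless $y_1 = 0$; so in fact the coordinate $f(a)+b$ must itself be a regular element of $B$. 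Since $\gb \subseteq \Jac(B)$, a regular element of the form $(\text{unit of } B/\gb) + (\text{element of }\gb)$ need not a priori be a unit of $B$, so here I would invoke that $B$'s relevant localizations are themselves total rings of fractions — or, more cleanly, re-localize using Proposition \ref{localization}, reducing to the case where $A/\f k$ is \emph{local} and a total ring of fractions, so that $A/\gi_0$, $B/\gb$, $C/\gc$ are local, and in a local total ring of fractions every regular element is a unit; then the fiber-product description of units closes the argument. I would organize the final write-up around this localization reduction, since it makes the torsion bookkeeping essentially trivial in the local case.
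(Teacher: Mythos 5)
Your overall target is the right one (under the stated hypotheses, ``non-unit'' and ``zero-divisor'' must coincide in $\RR$), and you correctly identify the three ingredients: the description of $\Max(\RR)$ forced by $\gb\subseteq\Jac(B)$ and $\gc\subseteq\Jac(C)$, the hypothesis that $A/\f k$ is a total ring of fractions, and the torsion hypothesis. But the route you propose has genuine gaps. First, your unit criterion is wrong: for $(f(a)+b,g(a)+c)\in\RR$, having both coordinates invertible in $B$ and in $C$ does not make the element invertible in $\RR$, because the inverse computed in $B\times C$ need not lie in the subring $\RR$ (already for $\gb=\gc=0$ and $f=g$ an inclusion $A\hookrightarrow B$, elements of $f(A)$ that are units of $B$ need not be units of $A$). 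The correct criterion, which follows from Proposition \ref{prime-spectrum} and the Jacobson hypotheses, is that $(f(a)+b,g(a)+c)$ is a unit of $\RR$ if and only if $a+\gi_0$ is a unit of $A/\gi_0$; so the plan of ``showing each coordinate is a unit'' cannot close the argument. Second, the chain ``$x$ regular $\Rightarrow$ $a+\gi_0$ regular in $A/\gi_0$ $\Rightarrow$ (lifting) $a+\f k$ regular in $A/\f k$'' is unsupported at both arrows: regularity does not pass to quotients, and regularity in $A/\gi_0$ does not imply regularity in the larger quotient $A/\f k$, since an annihilating witness may lie in $\gi_0\setminus\f k$. Third, your closing step invokes that (localizations of) $B$ and $C$ are total rings of fractions; nothing of the sort is assumed.

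The paper's proof avoids all of this by arguing the contrapositive constructively. If $x=(f(a)+b,g(a)+c)$ is not a unit, it lies in some maximal ideal, which by the Jacobson hypotheses must have the form $\gm\bowtie^{f,g}(\gb,\gc)$ with $\gm\in\Max(A)$ and $\gm\supseteq\gi_0\supseteq\f k$; hence $a+\f k$ is a non-unit of the total ring of fractions $A/\f k$, so there is $a'\notin\f k$ with $aa'\in\f k$. The torsion hypothesis then produces regular classes $x_0+\f k$ and $y_0+\f k$ with $f(x_0)b=0$ and $g(y_0)c=0$, and the single element $(f(a'x_0y_0),g(a'x_0y_0))$ is a nonzero element of $\RR$ annihilating $x$. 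In other words, torsion is used once, to manufacture one explicit annihilator, not to upgrade the coordinates $f(a)+b$ and $g(a)+c$ to units. If you want to keep your ``regular $\Rightarrow$ unit'' phrasing, you should prove the implication ``$a+\f k$ not a unit of $A/\f k$ $\Rightarrow$ $x$ a zero-divisor of $\RR$'' by exactly this construction, and then conclude via the criterion that $x$ is a unit of $\RR$ if and only if $a+\gi_0$ is a unit of $A/\gi_0$.
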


\begin{proof}
Let $(f(a)+b,g(a)+c)\in \RR$ be a non-invertible element. We have to show that $(f(a)+b,g(a)+c)$ is a zero-divisor. Since $\gb \subseteq \Jac(B)$ and $\gc \subseteq \Jac(C)$, the maximal ideals of $\RR$ are exactly those of the form $\gp\bowtie^{f,g}(\gb,\gc)$, where $\gp \in \Max(A)\cap V(\gi_0)$. Hence, there exists a maximal ideal $\gm$ of $A$ containing $\gi_0$ such that $(f(a)+b,g(a)+c)\in \gm\bowtie^{f,g}(\gb,\gc)$. In particular, $a \in \gm$ is a zero-ivisor modulo $\f k$, so there exists $a'\notin \f k$ such that $aa'\in \f k$. Since  both $\gb$ and $\gc$ are torsion $A/\f k$-modules, there exist two regular elements $x_0+\f k$ and $y_0+\f k$ in $A/\f k$ such that $f(x_0)b=0$ and $g(y_0)c=0$. Of course, $a'x_0y_0 \notin \f k$, and so $(f(x_0y_0),g(x_0y_0))$ is a non-zero element such that $(f(a)+b,g(a)+c)(f(x_0y_0),g(x_0y_0))=(0,0)$.
\end{proof}

\begin{Lemma}
Assume that $(f(a)+b,g(a)+c)$ is a zero-divisor of $\RR$. Then, at least one of the following conditions hold:
\begin{enumerate}
\item
$a+\gi_0$ is a zero-divisor of $A/\gi_0$;
\item
there exists $(b',c')\in \gb\times \gc$, with $(b',c')\neq (0,0)$, such that $b'(f(a)+b)=0$ and $c'(g(a)+c)=0$.
\end{enumerate}

\end{Lemma}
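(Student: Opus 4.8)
The plan is to extract from the hypothesis a nonzero annihilator of $x:=(f(a)+b,\,g(a)+c)$ inside $\RR$, write it in the canonical form defining elements of $\RR$, and then split into two cases according to whether the ``$A$-part'' of that annihilator lies in $\gi_0$ or not. Concretely, first I would fix an element $y\in\RR\setminus\{0\}$ with $xy=0$, and write $y=(f(a_1)+b_1,\,g(a_1)+c_1)$ for some $a_1\in A$, $b_1\in\gb$, $c_1\in\gc$. Applying the canonical surjection $p:\RR\longrightarrow A/\gi_0$ of Proposition \ref{prime-spectrum}(1) to the relation $xy=0$ yields $(a+\gi_0)(a_1+\gi_0)=0$ in $A/\gi_0$.

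\emph{Case 1: $a_1\notin\gi_0$.} Then $a_1+\gi_0$ is a nonzero element of $A/\gi_0$ killed by $a+\gi_0$, so by definition $a+\gi_0$ is a zero-divisor of $A/\gi_0$ and condition (1) holds. \emph{Case 2: $a_1\in\gi_0$.} Since $\gi_0=f^{-1}(\gb)=g^{-1}(\gc)$, we have $f(a_1)\in\gb$ and $g(a_1)\in\gc$; hence, setting $b':=f(a_1)+b_1\in\gb$ and $c':=g(a_1)+c_1\in\gc$, we get $y=(b',c')$, and $y\neq(0,0)$ forces $(b',c')\neq(0,0)$. Reading $xy=0$ componentwise in $B\times C$ then gives $b'(f(a)+b)=0$ and $c'(g(a)+c)=0$, which is exactly condition (2). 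Since every element of $\RR$ does admit a representation of the above form, and $p$ is a ring homomorphism, these two cases cover all possibilities.

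There is essentially no serious obstacle in this argument; the only points requiring a word of care are the observation that $\gi_0=f^{-1}(\gb)=g^{-1}(\gc)$ forces $f(\gi_0)\subseteq\gb$ and $g(\gi_0)\subseteq\gc$ (needed to see that $y\in\gb\times\gc$ in Case 2), and the fact that in Case 1 the conclusion ``$a+\gi_0$ is a zero-divisor'' is obtained by applying the definition directly to the witness $a_1+\gi_0$, which is nonzero precisely because $a_1\notin\gi_0$. I would therefore expect the write-up to be short.
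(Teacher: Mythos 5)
Your argument is correct and is essentially the paper's own proof: both take a nonzero annihilator $(f(a')+x,g(a')+y)$, deduce $aa'\in\gi_0$ (the paper by reading the first component and using $f^{-1}(\gb)=\gi_0$, you by applying the surjection $p$ onto $A/\gi_0$ — the same computation), and then split on whether $a'\in\gi_0$, landing in condition (1) or (2) exactly as you describe. No substantive difference.
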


\begin{proof}
Assume that $(f(a)+b,g(a)+c)(f(a')+x,g(a')+y)=0$ for some non-zero element $(f(a')+x,g(a')+y)$ of $\RR$. Then $(f(a)+b)(f(a')+x)=0$ implies that $aa' \in \gi_0$. If $a' \notin \gi_0$, then $a+\gi_0$ is a zero-divisor of $A/\gi_0$. Otherwise, $f(a')+x \in \gb$, $g(a')+y\in \gc$ and at least one of them is not zero.
\end{proof}

Notice that condition $(2)$ of the previous Lemma always implies that $(f(a)+b,g(a)+c)$ is a zero-divisor of $\RR$. If also condition $(1)$ implies that $(f(a)+b,g(a)+c)$ is a zero-divisor of $\RR$, we say that $\RR$ has the condition $(\star)$.

\begin{example}
Assume that $B$ and $C$ are local total quotient rings with maximal ideals $\gb$ and $\gc$, respectively. Let $f:A\longrightarrow B$ and $g: A\longrightarrow C$ be ring homomorphisms such that $\gi_0=f^{-1}(\gb)=g^{-1}(\gc)$. Then condition $(\star)$ holds.
\end{example}

\begin{Thm}

$(1)$ Assume that condition (\ref{*}) holds. If $\RR$ is a Pr\"ufer ring, then $A/\gi_0$ is a Pr\"ufer ring and $\gb B_{S_\gm}=f_\gm(\frac{r}{1})\gb B_{S_\gm}$ and $\gc C_{T_\gm}=g_\gm(\frac{r}{1})\gc C_{T_\gm}$ for every $\gm \in \Max(A)\cap V(\gi_0)$ and every element $r \in A$ such that $r+\gi_0$ is a regular element in $A/\gi_0$.

$(2)$ Assume that $\RR$ is a local ring having condition $(\star)$. Let $\gm$ be the unique maximal ideal of $A$ containing $\gi_0$. If $A/\gi_0$ is a Pr\"ufer ring and $\gb=f(r)\gb$ and $\gc=g(r)\gc$ for every $r\in \pi^{-1}(\Reg(A/\gi_0))$, then $\RR$ is a Pr\"ufer ring.
\end{Thm}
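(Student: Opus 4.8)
The plan is to treat the two parts separately.

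\textbf{Part (1)} is a repackaging of facts already in hand: under condition (\ref{*}), Proposition \ref{gauss} gives that $A/\gi_0$ is a Pr\"ufer ring, and the assertion about $\gb B_{S_\gm}$ and $\gc C_{T_\gm}$ is precisely Remark \ref{remark1}. So part (1) follows by combining these two statements, and there is nothing else to do.

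\textbf{Part (2)} is where the work lies. Since $\RR$ is local, Proposition \ref{prime-spectrum}(\ref{local}) tells us that $A/\gi_0$ is local with maximal ideal $\gm/\gi_0$, that $\gb\subseteq\Jac(B)$ and $\gc\subseteq\Jac(C)$, and that the only maximal ideal of $\RR$ is $\gm\bowtie^{f,g}(\gb,\gc)$. Recalling that $\RR$ is a Pr\"ufer ring exactly when all of its finitely generated regular ideals are invertible, and that over a local ring a finitely generated regular ideal is invertible if and only if it is principal, the goal reduces to showing: every finitely generated regular ideal $I=(\xi_0,\dots,\xi_n)$ of $\RR$, with $\xi_j=(f(a_j)+b_j,\,g(a_j)+c_j)$, is principal. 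The plan is to first descend to $\RR/(\gb\times\gc)\cong A/\gi_0$. A regular element of $I$ is sent, by the zero-divisor Lemma together with condition $(\star)$, to a regular element of $A/\gi_0$, so the image $\bar I=(\bar\xi_0,\dots,\bar\xi_n)$ (where $\bar\xi_j$ is the class of $\xi_j$, corresponding to $a_j+\gi_0$ under this isomorphism) is a finitely generated regular ideal of the local Pr\"ufer ring $A/\gi_0$; hence $\bar I$ is invertible, hence principal, say $\bar I=(d)$ with $d$ regular. Writing $d=\sum_j\lambda_j\bar\xi_j$ and $\bar\xi_j=de_j$ and using that $d$ is regular gives $\sum_j\lambda_je_j=1$, and locality forces some $e_{j_0}$ to be a unit; so $\bar I$ is generated by a single $\bar\xi_j$, and after reindexing $\bar I=(\bar\xi_0)$ with $a_0+\gi_0\in\Reg(A/\gi_0)$. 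By the hypothesis of part (2) this yields $\gb=f(a_0)\gb$ and $\gc=g(a_0)\gc$.

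The decisive point --- and, I expect, the main obstacle --- is to upgrade these two equalities, using $b_0\in\gb\subseteq\Jac(B)$ and $c_0\in\gc\subseteq\Jac(C)$, to
$$(f(a_0)+b_0)\gb=\gb,\qquad (g(a_0)+c_0)\gc=\gc.$$
This is a Nakayama-type argument and is the place where the local hypothesis on $\RR$ really enters: from $\gb=f(a_0)\gb$ write $b_0=f(a_0)m$ with $m\in\gb$; then $(f(a_0)+b_0)m=b_0(1+m)$, and $1+m$ is a unit of $B$, so $b_0\in(f(a_0)+b_0)\gb$, hence $b_0\gb\subseteq(f(a_0)+b_0)\gb$ and $\gb=f(a_0)\gb=\bigl((f(a_0)+b_0)-b_0\bigr)\gb\subseteq(f(a_0)+b_0)\gb$; the reverse inclusion is clear, and the argument for $\gc$ is identical. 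Granting this, $\xi_0\cdot(\gb\times\gc)=(f(a_0)+b_0)\gb\times(g(a_0)+c_0)\gc=\gb\times\gc$, so $\gb\times\gc\subseteq\xi_0\RR$. To finish, for each $j$ choose $t_j\in A$ with $a_j-t_ja_0\in\gi_0$ (possible since $\bar\xi_j\in(\bar\xi_0)$); a direct computation shows $\xi_j-(f(t_j),g(t_j))\xi_0\in\gb\times\gc$, so $\xi_j\in\xi_0\RR+(\gb\times\gc)=\xi_0\RR$. Therefore $I=\xi_0\RR$ is principal, as required.
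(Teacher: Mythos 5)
Your proof is correct. Part (1) is handled exactly as in the paper (combine Proposition \ref{gauss} with Remark \ref{remark1}), so there is nothing to compare there. For part (2) the paper takes a different, more computational route: it fixes two elements $(f(a_1)+b_1,g(a_1)+c_1)$ (regular) and $(f(a_2)+b_2,g(a_2)+c_2)$, uses condition $(\star)$ to get $a_1+\gi_0\in\Reg(A/\gi_0)$, compares $(a_1+\gi_0)$ and $(a_2+\gi_0)$ in the local Pr\"ufer ring $A/\gi_0$, and then in each of the two resulting cases explicitly solves for a quotient $(f(x)+\beta,g(x)+\gamma)$ --- writing $b_1=f(a_1)b_1'$, invoking $\gb\subseteq\Jac(B)$ to invert $1+b_1'$, and exhibiting $\beta,\gamma$ by division --- so as to verify that the two principal ideals of $\RR$ are comparable; Pr\"ufer-ness then comes from Griffin's regular total order criterion. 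You instead verify the definition of Pr\"ufer directly: you reduce to showing that every finitely generated regular ideal of the local ring $\RR$ is principal, descend to $A/\gi_0\cong\RR/(\gb\times\gc)$ to find a single generator $\xi_0$ modulo $\gb\times\gc$, and then isolate the clean Nakayama-style lemma $(f(a_0)+b_0)\gb=\gb$, $(g(a_0)+c_0)\gc=\gc$, which yields $\gb\times\gc\subseteq\xi_0\RR$ and finishes the argument in one stroke. The two proofs share the same three pillars (condition $(\star)$, the Pr\"ufer local hypothesis on $A/\gi_0$, and the hypothesis $\gb=f(r)\gb$, $\gc=g(r)\gc$ combined with $\gb\subseteq\Jac(B)$, $\gc\subseteq\Jac(C)$), but your decomposition buys two things: it avoids the explicit two-case division computations, and it establishes invertibility of arbitrary finitely generated regular ideals directly, whereas the paper's element-wise comparison still requires the (standard but unstated) reduction from arbitrary regular ideals to principal ones before Griffin's theorem applies.
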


\begin{proof}
Part $(1)$ follows from Proposition \ref{gauss} and Remark \ref{remark1}.

$(2)$ Let $(f(a_1)+b_1,g(a_1)+c_1), (f(a_2)+b_2,g(a_2)+c_2) \in \RR$ and assume that $(f(a_1)+b_1,g(a_1)+c_1)$ is a regular element.
Condition $(\star)$ implies that $a_1+\gi_0$ is a regular element of the Pr\"ufer local ring $A/\gi_0$, and so the principal ideals of $A/\gi_0$ generated by $a_1+\gi_0$ and $a_2+\gi_0$ are comparable. There are two cases.

Case 1. There exist $x \in A$ and $u \in \gi_0$ such that $a_2=a_1x+u$. We can write $b_1=f(a_1)b_1'$ for some $b_1'\in \gb$. Notice that in our hypothesis $\gb\subseteq \Jac(B)$, so in particular, $1+b_1'$ is an invertible element of $B$. It follows that we can also find $\beta \in \gb$ such that
$$
f(a_1)\beta=\frac{b_2+f(u)-b_1f(x)}{1+b_1'}.
$$
Elements $c_1'$ and $\gamma$ in $\gc$ can be defined in an analogous way, getting
$$
g(a_1)\gamma=\frac{c_2+g(u)-c_1g(x)}{1+c_1'}.
$$
It is now straightforward to show that $(f(a_2)+b_2,g(a_2)+c_2)=(f(a_1)+b_1,g(a_1)+c_1)(f(x)+\beta,g(x)+\gamma)$, that is, the principal ideals of $\RR$ generated by $(f(a_1)+b_1,g(a_1)+c_1)$ and $(f(a_2)+b_2,g(a_2)+c_2)$ are comparable.

Case 2. There exist $x \in A$ and $u \in \gi_0$ such that $a_1=a_2x+u$. Notice that $\gb=f(a_2x)\gb$ and $\gc=g(a_2x)\gc$, because also $a_2x+\gi_0$ is a regular element of $A/\gi_0$. As before, we can find elements $b_2'\in \gb$ and $c_2' \in \gc$ such that $b_2=f(a_2x)b_2'$, $c_2=g(a_2x)c_2'$. Moreover, there exist $\beta \in \gb$ and $\gamma \in \gc$ such that
$$
f(a_2x)\beta=\frac{b_1+f(u)-b_2f(x)}{1+f(x)b_2'}
$$
and
$$
g(a_2x)\gamma=\frac{c_1+g(u)-c_2g(x)}{1+g(x)c_2'}.
$$
Now, we can conclude noting that the following equality holds:
$$
(f(a_1)+b_1,g(a_1)+c_1)=(f(a_2)+b_2,g(a_2)+c_2)(f(x)+\beta f(x),g(x)+\gamma g(x)).
$$
\end{proof}

\section{Gaussian bi-amalgamations}
We are going to study the transfer of Gauss condition to bi-amalgamations. 
We largely use the following characterization of local Gauss rings (\cite[Theorem 2.2]{HT}). A local ring $S$ is Gaussian if and only if for every two elements $x,y\in S$ the following two conditions hold:

(i) $(x,y)^2=(x^2)$ or $(y^2)$;

(ii) if $(x,y)^2=(x^2)$ and $xy=0$, then $y^2=0$.

\begin{Lemma}\label{idquad}
Let $\ga$ be an ideal of a Gaussian local ring $S$. Then $\ga^2=0$ if and only if $a^2=0$ for every $a \in \ga$. 
\end{Lemma}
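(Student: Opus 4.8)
The plan is to prove the nontrivial implication: assuming $\ga^2 = 0$ is false is not what we want; rather, we want to show that if $a^2 = 0$ for every $a \in \ga$, then $\ga^2 = 0$. The converse is trivial, since $a^2 \in \ga^2$ for every $a \in \ga$. So the whole content is in showing that the vanishing of all squares of elements forces the vanishing of all products $ab$ with $a, b \in \ga$.

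First I would fix $a, b \in \ga$ and apply the characterization of local Gaussian rings from \cite[Theorem 2.2]{HT} (quoted just above) to the pair $x = a$, $y = b$. Condition (i) says $(a,b)^2 = (a^2)$ or $(a,b)^2 = (b^2)$; by symmetry assume $(a,b)^2 = (a^2)$. But by hypothesis $a \in \ga$ implies $a^2 = 0$, so $(a^2) = 0$, hence $(a,b)^2 = 0$. Since $ab \in (a,b)^2$, this immediately gives $ab = 0$. Therefore every product of two elements of $\ga$ vanishes, and since $\ga^2$ is generated (as an ideal) by such products, we conclude $\ga^2 = 0$.

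In fact this argument shows something even more transparent: one does not even need condition (ii) of the characterization, nor the full strength of $\ga$ being an ideal — only that $\ga$ is a subset all of whose elements square to zero and that $S$ is local Gaussian. I would present it exactly in this streamlined form: for any $a,b \in \ga$, condition (i) applied to $a,b$ yields $(a,b)^2 \subseteq (a^2) + (b^2) = 0$ (one can also phrase (i) as the dichotomy and handle it by symmetry), whence $ab \in (a,b)^2 = 0$.

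There is essentially no obstacle here; the only point requiring a word of care is the logical structure of condition (i), which is stated as an \emph{equality} to one of the two principal ideals rather than a containment in their sum — but in either alternative the relevant principal ideal is zero by hypothesis, so the dichotomy is harmless. The lemma is a quick corollary of the Hamilton–Tang characterization, and I would keep the proof to two or three sentences.
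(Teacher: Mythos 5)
Your proof is correct and is essentially identical to the paper's: both arguments fix $x,y\in\ga$, invoke condition (i) of the Tsang characterization \cite[Theorem 2.2]{HT} to get $(x,y)^2=(x^2)$ or $(y^2)$, observe that either alternative is zero under the hypothesis, and conclude $xy\in(x,y)^2=0$. (One cosmetic slip: the cited result is due to H.~Tsang, not ``Hamilton--Tang''.)
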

\begin{proof}
For every $x, y \in \ga$, the ideal $(x,y)^2$ is equal to $(x^2)$ or $(y^2)$ \cite[Theorem 2.2]{HT}. If $a^2=0$ for every $a \in \ga$, we have $(xy)=(x,y)^2=0$, which implies that $xy=0$ and so $\ga^2=0$.
\end{proof}
\begin{Thm}\label{Gaussnec}
Assume that $\RR$ is a Gaussian local ring. Then:
\begin{enumerate}
\item
$A/\gi_0, f(A)+\gb$ and $g(A)+\gc$ are Gaussian local rings;
\item
if $\gb^2\neq 0$, then $\gc^2=0$;
\item
if $\gb^2=0$ and $f$ is surjective, then $f(a)\gb\subseteq f(a^2)B$ for every $a \in A$.
\end{enumerate}
\end{Thm}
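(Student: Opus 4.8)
The plan is to lean on two things. First, the three rings in~(1) are all homomorphic images of $\RR$, so~(1) reduces to the permanence of the Gaussian and local properties under surjections. Second, the local Gaussian criterion (i)--(ii) of \cite[Theorem 2.2]{HT}, applied to well-chosen pairs of elements, forces the ideal relations in~(2) and~(3). I do not expect a deep obstacle here; the work is entirely in choosing the test elements and keeping track of which coordinate produces which vanishing.

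\emph{Part (1).} The canonical surjection $p\colon\RR\to A/\gi_0$ of Proposition~\ref{prime-spectrum}(1) already presents $A/\gi_0$ as a quotient of $\RR$. For the other two, I would observe that the restrictions to $\RR$ of the two projections of $B\times C$ are ring homomorphisms whose images are exactly $f(A)+\gb$ and $g(A)+\gc$: for any $a\in A$ and $b\in\gb$ the element $(f(a)+b,\,g(a))$ lies in $\RR$, so the first projection is onto $f(A)+\gb$, and symmetrically for the second. Since a homomorphic image of a Gaussian ring is Gaussian (lift polynomials coefficientwise and push the content equality through the surjection) and a homomorphic image of a local ring is local, all three quotients are Gaussian local rings.

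\emph{Part (2).} For $b\in\gb$ and $c\in\gc$ set $x:=(b,0)$ and $y:=(0,c)$ in $\RR$; then $xy=0$, $x^2=(b^2,0)$, $y^2=(0,c^2)$. Criterion~(i) of \cite[Theorem 2.2]{HT} gives $(x,y)^2=(x^2)$ or $(y^2)$. In the first case $y^2\in(x^2)$, say $(0,c^2)=(b^2,0)r$ with $r\in\RR$; the second coordinate of the right-hand side is $0$, so $c^2=0$. In the second case symmetrically $b^2=0$. Hence every pair $(b,c)\in\gb\times\gc$ satisfies $b^2=0$ or $c^2=0$. If now $\gb^2\neq0$ and $\gc^2\neq0$, then $\gb\times0$ and $0\times\gc$ are ideals of $\RR$ (note $(f(a)+b')b\in\gb$) whose squares are nonzero, so Lemma~\ref{idquad}, applied in the Gaussian local ring $\RR$, produces $b_0\in\gb$ and $c_0\in\gc$ with $b_0^2\neq0$ and $c_0^2\neq0$, contradicting the dichotomy for the pair $(b_0,c_0)$. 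Therefore $\gb^2\neq0$ forces $\gc^2=0$.

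\emph{Part (3).} If $f$ is surjective then $f(A)+\gb=B$, so $B$ is a Gaussian local ring by~(1). Fix $a\in A$ and $\beta\in\gb$ and apply criterion~(i) in $B$ to $x:=f(a)$ and $y:=\beta$: since $\gb^2=0$ we have $y^2=0$, so $(x,y)^2$ equals $(x^2)$ or $(0)$, and in either case $f(a)\beta=xy\in(x^2)=(f(a)^2)=f(a^2)B$, using $f(a^2)=f(a)^2$. Letting $\beta$ range over $\gb$ yields $f(a)\gb\subseteq f(a^2)B$. The only points that need a moment's care are verifying that $\gb\times0$ and $0\times\gc$ are genuine ideals of $\RR$ so that Lemma~\ref{idquad} applies, and the identity $f(a^2)B=(f(a)^2)$ in part~(3); beyond that, every step is a direct reading of the local Gaussian criterion.
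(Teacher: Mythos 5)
Your proof is correct and follows essentially the same route as the paper: part (1) by passing to quotients of $\RR$, part (2) by testing the pair $((b,0),(0,c))$ against the Tsang criterion together with Lemma~\ref{idquad}, and part (3) by the same criterion. The only cosmetic difference is that in (3) you apply the criterion inside $B$ (using surjectivity of $f$ to know $B=f(A)+\gb$ is Gaussian local), whereas the paper tests the pair $((f(a),g(a)),(b,0))$ inside $\RR$; both work.
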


\begin{proof}
$(1)$ immediately follows from the fact that quotients of a Gaussian ring are Gaussian rings.

$(2)$ Assume that $\gb^2\neq 0$. Since $\gb$ is an ideal of $f(A)+\gb$, by Remark \ref{idquad} there exists an element $b \in \gb$ such that $b^2\neq 0$. For any element $c \in \gc$ we have that the ideal $((b,0),(0,c))^2=((b^2,0),(0,c^2))$ of $\RR$ must be equal to $((b^2,0))$ or $((0,c^2))$. Since $b^2\neq 0$, $((b^2,0),(0,c^2))=((b^2,0))$ must be the case, which implies that $c^2=0$. We can conclude applying Remark \ref{idquad}.

$(3)$ Assume that $\gb^2=0$. For any element $a \in A$ we have that the ideal $((f(a),g(a)),(b,0))^2$ is equal to $0$ or $(f(a^2),g(a^2))$. In the first case $f(a)b=0$, while in the second one, there exist $\alpha \in A,\ \beta \in \gb$ and $\gamma \in \gc$ such that $f(a)b=f(a^2)(f(\alpha)+\beta)$ and $0=g(a^2)(g(\alpha)+\gamma)$. In both cases $f(a)b \in f(a^2)B$.
\end{proof}
The following example shows that the converse of Theorem \ref{Gaussnec} does not hold. 
\begin{example}
	Let $p$ be a prime number, let $A:=\mathbb Z_{(p)}, B:=\mathbb Z_{(p)}/p^4\mathbb Z_{(p)}$, let $f:A\longrightarrow B$ be the canonical projection and let $\f b$ be the principal ideal of $B$ generated by the class of $p^2$. By \cite[Example 2.1]{KLT}, the bi-amalgamation $A\bowtie^{f,{\rm Id}_A}(\f b, \f i_0)$ is the standard amalgamation $A\bowtie^f \f b$ and, since $f(p)\f b\neq f(p^2)\f b$, it is not Gaussian, by \cite[Theorem 4.1]{ASS}, but all the conditions of Theorem \ref{Gaussnec} are trivially satisfied. 
\end{example}
\begin{Thm}\label{Gausssuf}
Assume that $f$ and $g$ are surjective. Then $\RR$ is a Gaussian local ring whenever the following conditions hold:
\begin{enumerate}
\item
$A$ is a Gaussian local ring;
\item
$\gb^2=\gc^2=0$;
\item
$f(a)\gb=f(a^2)\gb$ and $g(a)\gc=g(a^2)\gc$ for every $a \in A$.
\end{enumerate}
\end{Thm}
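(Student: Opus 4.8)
The plan is to check that $\RR$ satisfies the two pointwise conditions characterizing local Gaussian rings recalled above (\cite[Theorem 2.2]{HT}), after recording a workable description of $\RR$. Since $f$ and $g$ are surjective one has $\gb=f(\gi_0)$ and $\gc=g(\gi_0)$, and the definition of the bi-amalgamation simplifies to
$$\RR=\{(f(u),g(v))\mid u,v\in A,\ u-v\in\gi_0\};$$
thus $\RR$ is the fiber product of the canonical surjections $B=A/\Ker f\longrightarrow A/\gi_0$ and $C=A/\Ker g\longrightarrow A/\gi_0$, and $B$ and $C$ are themselves Gaussian local rings, being quotients of $A$. That $\RR$ is local follows at once from Proposition \ref{prime-spectrum}(4), since $A/\gi_0$ is local and $\gb,\gc$ are nilpotent by (2), hence contained in the Jacobson radicals of $B$ and $C$ respectively.

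For the Gaussian conditions fix $X=(f(u_1),g(v_1))$ and $Y=(f(u_2),g(v_2))$ in $\RR$, with $u_i-v_i\in\gi_0$. The key point is to run the Gaussian dichotomy in $A$ itself (not in $A/\gi_0$, nor separately in $B$ and $C$): as $A$ is Gaussian local we may assume, after possibly interchanging $X$ and $Y$, that $(u_1,u_2)^2=(u_1^2)$ in $A$, say $u_2^2=\mu u_1^2$ and $u_1u_2=\nu u_1^2$. Applying $f$ and $g$ gives $(f(u_1),f(u_2))^2=(f(u_1)^2)$ in $B$ and $(g(u_1),g(u_2))^2=(g(u_1)^2)$ in $C$. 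Now one exploits hypothesis (3) in the form $g(u_i)\gc=g(u_i)^2\gc$ together with $\gc^2=0$: writing $g(v_i)=g(u_i)+\gamma_i$ with $\gamma_i\in\gc$, one finds $g(v_i)^2=g(u_i)^2(1+2\eta_i)$ and $g(v_1)g(v_2)=g(u_1)^2\theta_0$, where $\eta_i\in\gc$ and $\theta_0\in C$ satisfies $\theta_0\equiv g(\nu)$ modulo $\gc$. Feeding this back, a short computation shows $Y^2=(f(\mu),q_1)X^2$ and $XY=(f(\nu),q_2)X^2$ for suitable $q_1,q_2\in C$ with $q_1\equiv g(\mu)$ and $q_2\equiv g(\nu)$ modulo $\gc$ — which is exactly what places the elements $(f(\mu),q_1)$ and $(f(\nu),q_2)$ in $\RR$. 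Hence $(X,Y)^2=(X^2)$, establishing condition (i).

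For condition (ii) assume moreover $XY=0$, i.e. $f(u_1)f(u_2)=0$ in $B$ and $g(v_1)g(v_2)=0$ in $C$; along the way above one also checks $(g(v_1),g(v_2))^2=(g(v_1)^2)$ in $C$. If $(u_1,u_2)^2=(u_1^2)$ in $A$, applying condition (ii) of \cite[Theorem 2.2]{HT} in the Gaussian local rings $B$ and $C$ to the pairs $(f(u_1),f(u_2))$ and $(g(v_1),g(v_2))$ yields $f(u_2)^2=0$ and $g(v_2)^2=0$, so $Y^2=0$. If instead $(u_1,u_2)^2=(u_2^2)$ strictly in $A$, the symmetric version of the step above gives $X^2=0$ and $(X,Y)^2=(Y^2)$; combined with the hypothesis $(X,Y)^2=(X^2)=0$ this again forces $Y^2=0$.

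I expect the main obstacle to be precisely the ``mixed'' case hidden in step (i): a priori the Gaussian dichotomy for $X$ in $B$ may single out one coordinate and in $C$ the other, and this cannot be reconciled by looking at $A/\gi_0$, $B$ or $C$ in isolation. It is resolved by using the full strength of hypothesis (1) — that $A$, and not merely $A/\gi_0$, is Gaussian — which forces compatible choices in $B$ and $C$, while hypothesis (3) is what turns the correction terms coming from $\gb$ and $\gc$ into unit multiples so that everything descends cleanly; the rest is routine bookkeeping with the square-zero ideals.
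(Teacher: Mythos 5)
Your proof is correct and follows essentially the same strategy as the paper's: verify Tsang's two local conditions, run the Gaussian dichotomy in $A$ itself, and use hypotheses (2) and (3) to absorb the correction terms from $\gb$ and $\gc$ into unit multiples, with the Gaussianness of $B$ and $C$ (as quotients of $A$) handling condition (ii). Your reparametrization of $\RR$ as $\{(f(u),g(v)) \mid u-v\in\gi_0\}$ and your two-case treatment of condition (ii) are only cosmetic variations on the paper's computation with elements $(f(a_i)+b_i, g(a_i)+c_i)$.
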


\begin{proof}
First notice that conditions $(1)$ and $(2)$ imply that $\RR$ is a local ring (Proposition \ref{prime-spectrum}$(4)$). Moreover, since $A$ is Gaussian and $f$ and $g$ are surjective, then $B$ and $C$ are Gaussian.
Consider elements $(f(a_1)+b_1,g(a_1)+c_1)$ and $(f(a_2)+b_2,g(a_2)+c_2)$ of $\RR$. Since $A$ is Gaussian, we can assume that the ideal $(a_1,a_2)^2$ of $A$ is equal to $(a_1)^2$, and so there exist elements $x,y \in A$ such that $a_2^2=a_1^2x$ and $a_1a_2=a_1^2y$. We want to show that
the ideal $((f(a_1)+b_1,g(a_1)+c_1),(f(a_2)+b_2,g(a_2)+c_2))^2$ of $\RR$ is equal to $((f(a_1)+b_1,g(a_1)+c_1)^2)$. In order to do that, we want to prove that there exist elements $\beta,\beta' \in \gb$ and $\gamma,\gamma' \in \gc$ such that 
$$
(f(a_2)+b_2,g(a_2)+c_2)^2=(f(a_1)+b_1,g(a_1)+c_1)^2(f(x)+\beta,g(x)+\gamma)
$$
and
$$
(f(a_1)+b_1,g(a_1)+c_1)(f(a_2)+b_2,g(a_2)+c_2)=$$
$$=(f(a_1)+b_1,g(a_1)+c_1)^2(f(y)+\beta',g(y)+\gamma').
$$
Using the fact that $\gb^2=\gc^2=0$ and the relation $a_2^2=a_1^2x$, the first equality can be rewritten as
$$
(2f(a_2)b_2, 2g(a_2))=(f(a_1^2)\beta+2f(a_1x)b_1, g(a_1^2)\gamma+2g(a_1x)c_1).
$$
By $(3)$, we have $2f(a_2)b_2=f(a_2^2)b_2'=f(a_1^2x)b_2'$  and $2g(a_2)c_2=g(a_2^2)c_2'=g(a_1^2x)c_2'$ for suitable elements $b_2'\in \gb$ and $c_2' \in \gc$. So, it suffices that $\beta$ and $\gamma$ satisfy the equality
$$
(f(a_1^2)\beta, g(a_1^2)\gamma)=(f(a_1)(f(a_1x)b_2'-2f(x)b_1), g(a_1)(g(a_1x)c_2'-2g(x)c_1)).
$$
The existence of the elements $\beta$ and $\gamma$ is now obvious, again by $(3)$.
For the second equality, we can argue in a similar way.
Now, we want to prove that if $((f(a_1)+b_1,g(a_1)+c_1),(f(a_2)+b_2,g(a_2)+c_2))^2=((f(a_1)+b_1,g(a_1)+c_1)^2)$ and $(f(a_1)+b_1,g(a_1)+c_1)(f(a_2)+b_2,g(a_2)+c_2)=0$, then $(f(a_2)+b_2,g(a_2)+c_2)^2=0$. Looking only at the first component, we have that the ideal $(f(a_1)+b_1,f(a_2)+b_2)^2$ of $B$ is equal to $(f(a_1)+b_1)^2$ and that the product $(f(a_1)+b_1)(f(a_2)+b_2)=0$. Since $B$ is Gaussian, it implies that $(f(a_2)+b_2)^2=0$. Similarly, $(g(a_2)+c_2)^2=0$.
To conclude, apply \cite[Theorem 2.2]{HT}. 
\end{proof}

\begin{remark}
Notice that Example \ref{example1} shows that condition $(1)$ in Theorem \ref{Gausssuf} is not necessary. To prove that conditions (2) and (3) are not necessary for $\RR$ to be a Gauss ring, let $p$ be a prime number, $A:=\mathbb Z_{(p)}$, $B:=\mathbb Z_{(p)}/p^2\mathbb Z_{(p)}$, let $f:A\longrightarrow B$ be the canonical projection and let $\f b$ (resp., $\f m$) be the maximal ideal of $B$ (resp., of $A$). By \cite[Example 2.1]{KLT}, the bi-amalgamation $A\bowtie^{f,{\rm Id}_A}(\f b,\f m)$ is the amalgamated algebra $A\bowtie^f\f b$ and it is Gauss, in view of \cite[Theorem 4.1]{ASS}. But $\f m^2\neq 0$ and, for instance, $p\f m\neq p^2\f m$. 
\end{remark}

\begin{example}
Let $(V,\gm)$ be a valuation domain such that $\gm^2\neq \gm^3$. Set $A:=V/\gm^3$, $B=C:=V/\gm^2$ and consider the canonical morphisms $f:A\longrightarrow B$ and $g:A\longrightarrow C$. If $\gb$ and $\gc$ are the maximal ideals of $B$ and $C$ respectively and $\gi_0=f^{-1}(\gb)=g^{-1}(\gc)$ is the maximal ideal of $A$, then it is immediate to check that the conditions of Theorem \ref{Gausssuf} are satisfied, and so $\RR$ is a Gaussian local ring.
\end{example}

\bibliographystyle{amsalpha}

\end{document}